\newtheorem{theorem}{Theorem}
\newtheorem{lemma}{Lemma}
\newtheorem{prop}{Proposition}
\numberwithin{equation}{section}
\begin{document}

\title[Can the Wave SPDE hit Zero?]{Can the Stochastic Wave Equation with 
Strong Drift Hit Zero?}
\author[Lin and Mueller]{Kevin Lin\and Carl Mueller}
\address{Upstart Network, Inc.}
\urladdr{https://www.upstart.com/}
\address{Dept. of Mathematics
\\University of Rochester
\\Rochester, NY  14627}
\urladdr{http://www.math.rochester.edu/people/faculty/cmlr}
\thanks{Supported by a Simons grant.} 
\keywords{wave equation, white noise, stochastic partial differential equations.}
\subjclass[2010]{Primary, 60H15; Secondary, 60J45, 35L05.}
\begin{abstract}
We study the stochastic wave equation with multiplicative noise and singular 
drift: 
\begin{equation*}
\partial_tu(t,x)=\Delta u(t,x)+u^{-\alpha}(t,x)+g(u(t,x))\dot{W}(t,x)
\end{equation*}
where $x$ lies in the circle $\mathbf{R}/J\mathbf{Z}$ and $u(0,x)>0$.  
We show that 

(i) If $0<\alpha<1$ then with positive probability, $u(t,x)=0$ for some 
$(t,x)$.  

(ii) If $\alpha>3$ then with probability one, $u(t,x)\ne0$ for all $(t,x)$.  
\end{abstract}
\maketitle

\section{Introduction}

\label{section:introduction}
One of the classic questions about stochastic processes is whether they can 
hit a given set.  That is, for a process $X_t$ taking values in a space $S$, 
and for $A\subset S$, do we have
\begin{equation*}
\mathbf{P}(X_t\in A\text{ for some $t$})>0.
\end{equation*}
For example, consider the Bessel process $R_t$ with parameter $n$, which 
satisfies
\begin{equation*}
dR=\frac{n-1}{2R}dt+dW
\end{equation*}
where $W(t)$ is a one-dimensional Brownian motion and we assume that $R_0>0$.  
It is well known that if we allow $n$ to take nonnegative real values, then 
$R_t$ can hit 0 iff $n<2$.  For Markov processes such as $R_t$, harmonic 
functions and potential theory are powerful tools which have led to rather 
complete answers to such questions; see \cite{mper10} or most other 
books in Markov processes.  

For stochastic partial differential equations (SPDE), potential theory 
becomes less tractible due to the infinite-dimensional state space of 
solutions, and hitting questions have not been as thoroughly studied.  To be 
specific, solutions $u(t,x)$ usually depend on a time parameter $t$ and a 
spatial parameter $x$.  So for a fixed time $t$, the solution $u(t,x)$ is a 
function of $x$, and the state space of the process is an infinite 
dimensional function space.  

Nonetheless, hitting questions have been studied for certain SPDE, see
\cite{DKN07,DS10,DS15,mt01,NV09} among others.  These papers 
deal with the stochastic heat and wave equations either with no drift or with 
well behaved drift.  

As for SPDE analogues of the Bessel process, the only results known to the 
authors are in Mueller \cite{mue98} and Mueller and Pardoux \cite{mp99}.  
Here we assume that $u(t,x)$ is scalar valued, and as before $t>0$.  But now 
we let $x$ lie in the unit circle $[0,1]$ with endpoints identified.  We 
also assume that $u(0,x)$ is continuous and strictly positive.  Here 
and throughout the paper we write $\dot{W}(t,x)$ for two-parameter white 
noise.  
Suppose $u$ satisfies the following SPDE.  
\begin{equation*}
\partial_tu(t,x)=\Delta u(t,x)+u^{-\alpha}(t,x)+g(u(t,x))\dot{W}(t,x)
\end{equation*}
where there exist constants $0<c_0<C_0<\infty$ for which 
$c_0\leq g(u)\leq C_0$ for all values of $u$.  Let $\tau$ be the first time 
at which $u$ hits 0, and let $\tau=\infty$ if $u$ does not hit 0.  Then 
$\mathbf{P}(\tau<\infty)>0$ if $\alpha<3$, see \cite{mue98} 
Corollary 1.1.  Also, $\mathbf{P}(\tau<\infty)=0$ if $\alpha>3$,
see Theorem 1 of [MP99].

The situation for vector-valued solutions $u(t,x)$ of the stochastic heat 
equation is unclear.  Indeed, the curve $x\to u(t,x)$ may wind around 0, 
and perhaps then $u$ will contract to 0 in cases where it would ordinarily 
stay away from 0.

The purpose of this paper is to study hitting question for the stochastic 
wave equation with scalar solutions and with strong drift.  As is well 
known, there are crucial differences between the heat and wave equations. 
For example, the heat equation satisfies a maximum principle while the wave 
equation does not.  The same holds for the comparison principle, 
which states that if the stochastic heat equation has two solutions with the 
first solution initially larger than the second, then the first solution 
will almost surely remain larger than the second as time goes on.  So while 
certain arguments from the heat equation case carry over, new ideas are 
required.  

Here is the setup for our problem.  Again, we let $t\geq0$, and $x$ lies in 
the circle 
\begin{equation*}
\mathbf{I}=[0,J]
\end{equation*}
with endpoints identified.  We study scalar-valued 
solutions $u(t,x)$ to the following equation.  
\begin{align}
\label{eq:stoch-wave}
\partial_t^2 u(t,x)
 &=\Delta u(t,x)+u^{-\alpha}(t,x)+g(u(t,x))\dot{W(t,x)}  \\
u(0,x)&=u_0(x) \nonumber\\
\partial_tu(0,x)&=u_1(x).   \nonumber
\end{align}
As usual, $u$ and our two-parameter white noise $\dot{W}$ depend on a random 
parameter $\omega$ which we suppress.  As for $x$ taking values in 
higher-dimensional spaces, it is well known that (\ref{eq:stoch-wave}) is 
well-posed only in one spatial dimensions.  Indeed, in 
two or more spatial dimensions we would expect that the solution $u$ only 
exists as a generalized function, but then it is hard to give meaning to 
nonlinear terms such as $u^{-\alpha}$ or $g(u)$.  

Next, we define the first time that $u$ hits 0.  Let
\begin{equation*}
\tau_\infty=\inf\Big\{t>0: \inf_{0\leq s<t}\inf_{x\in\mathbf{I}}u(t,x)=0\Big\}
\end{equation*}
and let $\tau_\infty=\infty$ if the set in the above definition is empty.  

Before stating our main theorems, we give some assumptions.  

\medskip
\noindent
\textbf{Assumptions}
\begin{enumerate}
\item[(i)] $u_0$ is H\"older continuous of order $1/2$ on $\mathbf{I}$.  
\item[(ii)] There exist constants $0<c_0<C_0<\infty$ such that 
$c_0\leq u_0(x)\leq C_0$ for all $x\in \mathbf{I}$.  
\item[(iii)] $u_1$ is H\"older continuous of order $1/2$ on $\mathbf{I}$ and 
hence bounded.  
\item[(iv)] There exist constants $0<c_g<C_g<\infty$ such that 
$c_g\leq g(y)\leq C_g$ for all $y\in\mathbf{R}$.  
\end{enumerate}
\medskip

Here are our main theorems.  
\begin{theorem}
\label{th:2}
Suppose that $u(t,x)$ satisfies (\ref{eq:stoch-wave}), and that the above 
assumptions hold.  Then $\alpha>3$ implies
\begin{equation*}
\mathbf{P}(\tau_\infty<\infty)=0.
\end{equation*}
That is, $u$ does not hit 0.
\end{theorem}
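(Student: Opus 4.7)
I would argue by contradiction, using the mild-formulation decomposition $u = v + D$ where
\begin{equation*}
v(t,x) := w(t,x) + \int_0^t\!\!\int_{\mathbf{I}} G(t-s,x-y)\, g(u(s,y))\, W(ds,dy),
\end{equation*}
\begin{equation*}
D(t,x) := \int_0^t\!\!\int_{\mathbf{I}} G(t-s,x-y)\, u^{-\alpha}(s,y)\, dy\, ds \;\ge\; 0,
\end{equation*}
with $w$ the homogeneous wave solution for initial data $(u_0,u_1)$ and $G(t,z)=\tfrac12\mathbf{1}_{|z|\le t}$ the one-dimensional wave Green's function on $\mathbf{I}$. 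Suppose for contradiction that $\mathbf{P}(\tau_\infty \le T) > 0$ for some $T < \infty$ and work on this event. Introduce the level-hitting stopping times $\tau_k := \inf\{t>0 : \min_{x\in\mathbf{I}} u(t,x) \le 2^{-k}\}$ with minimizers $x_k$, so $\tau_k \uparrow \tau_\infty$ and $u(\tau_k,x_k) = 2^{-k}$.

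Step 1 (upper bound on the drift). Using assumptions (i), (iii), (iv) together with standard stochastic-convolution estimates (Burkholder--Davis--Gundy followed by a Kolmogorov / Garsia--Rodemich--Rumsey continuity argument) one shows that $v$ is a.s.\ H\"older-$\tfrac12$ in space-time on $[0,T]\times\mathbf{I}$, with some random finite constant $K=K(\omega)$; in particular $\|v\|_\infty \le K$. Since $D = u - v$, this immediately gives the trivial a.s.\ upper bound
\begin{equation*}
D(\tau_k, x_k) \;\le\; 2^{-k} + K.
\end{equation*}

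Step 2 (matching lower bound). Consider the backward light cone $R_k := \{(s,y) : 0 \le \tau_k - s \le \delta_k,\ |y-x_k| \le \tau_k - s\}$, with $\delta_k$ to be chosen. Using the H\"older regularity of $v$, the continuity of $u$ on $[0,\tau_\infty)\times\mathbf{I}$, and a self-improving bootstrap that simultaneously controls $D$ on $R_k$, I would establish the cone estimate $u(s,y) \le C \cdot 2^{-k}$ throughout $R_k$. Since $\int_{R_k} G(\tau_k-s,x_k-y)\,dy\,ds = \delta_k^2/2$, integration then yields
\begin{equation*}
D(\tau_k,x_k) \;\ge\; \int_{R_k} G\, u^{-\alpha} \;\gtrsim\; 2^{k\alpha}\, \delta_k^2.
\end{equation*}
Optimizing $\delta_k$ against the H\"older scaling, and exploiting the margin $\alpha > 3$ to close the bootstrap, forces this lower bound to diverge as $k\to\infty$, contradicting Step~1. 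Hence $\mathbf{P}(\tau_\infty \le T) = 0$; sending $T \to \infty$ completes the proof.

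The main obstacle is the cone estimate in Step~2. Because the wave equation admits no comparison principle, and because $D$ may itself lose H\"older regularity as $u$ approaches zero, the H\"older bound on $v$ does not transfer directly to $u$; one needs a self-consistent argument that simultaneously controls $u$, $D$, and the cone size $\delta_k$ on $R_k$. A purely naive application of the H\"older-$\tfrac12$ estimate for $v$ gives a weaker threshold, and it is precisely the delicate interplay between the noise regularity, the structure of the wave cone, and the strength of the drift at $\alpha > 3$ that makes the bootstrap close. This wave-equation-specific circularity has no counterpart in the heat-equation analysis of \cite{mp99}, where the comparison principle permits a far more direct argument.
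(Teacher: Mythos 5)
Your setup --- the decomposition $u=(V_u+N_u)+D_u$, the H\"older-$(\tfrac12-\epsilon)$ control of the non-drift part, and the backward light cone rooted at a point where $u$ is small --- matches the paper's, but your contradiction mechanism is different and, as sketched, does not reach $\alpha>3$. You aim to show that $D_u(\tau_k,x_k)$ diverges while it is trivially bounded by $2^{-k}+\|V_u+N_u\|_\infty$. The obstacle is exactly your Step 2 cone estimate $u\le C\,2^{-k}$ on a cone of height $\delta_k$. Since the backward cone of $(s,y)$ is strictly contained in that of $(\tau_k,x_k)$, the drift difference is negative going backward, so the only available upper bound on the cone is $u(s,y)\le (V_u+N_u)(s,y)-(V_u+N_u)(\tau_k,x_k)+2^{-k}\le Y\delta_k^{1/2-\epsilon}+2^{-k}$; the term $Y\delta_k^{1/2-\epsilon}$ is a genuine Gaussian fluctuation of the stochastic convolution that no bootstrap can remove. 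Hence $u\le C2^{-k}$ forces $\delta_k\lesssim 2^{-2k}$, and your drift lower bound becomes $\delta_k^2\,2^{k\alpha}\sim 2^{k(\alpha-4)}$, which diverges only for $\alpha>4$; for general $\delta_k$ the bound $\delta_k^2\bigl(Y\delta_k^{1/2-\epsilon}+2^{-k}\bigr)^{-\alpha}\lesssim \delta_k^{\,2-\alpha/2}$ stays bounded throughout $3<\alpha\le 4$. So the bootstrap you hope will close ``using the margin $\alpha>3$'' cannot close on that range.

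The paper avoids this by never asking the drift to diverge. For $(t,x)$ on the curved boundary $\partial B_R$ of the backward cone at distance $R$ from the apex $(\tau_\delta,x_\delta)$, it bounds the \emph{decrement} $\Delta D(t,x)=D_u(t,x)-D_u(\tau_\delta,x_\delta)\le -|B_R|\,(2YR^{1/2-\epsilon})^{-\alpha}\sim -R^{2-\alpha(1/2-\epsilon)}$, using precisely the weak cone bound $u\le YR^{1/2-\epsilon}+\delta$ that you would discard as insufficient. The contradiction is then local and pathwise: $u(t,x)\le YR^{1/2-\epsilon}+\delta+\Delta D(t,x)<0$ for small $R$ as soon as $2-\alpha/2<1/2$, i.e.\ $\alpha>3$ up to $\epsilon$, which contradicts $u>0$ on $[0,\tau)$. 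In other words the drift decrement over the cone only has to beat the H\"older increment of the noise at the same scale $R$, not a constant; that exponent comparison is exactly where the threshold $3$ comes from. If you replace your target ``$D_u(\tau_k,x_k)\to\infty$'' with ``$u<0$ somewhere on $\partial B_R$ strictly before $\tau$,'' your ingredients assemble into the paper's argument.
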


\begin{theorem}
\label{th:1}
Suppose that $u(t,x)$ satisfies (\ref{eq:stoch-wave}), and that above 
assumptions hold.  Then $0<\alpha<1$ implies
\begin{equation*}
\mathbf{P}(\tau_\infty<\infty)>0.
\end{equation*}
That is, $u$ can hit 0.
\end{theorem}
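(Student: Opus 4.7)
The plan is to combine a stopping-time truncation with a Girsanov change of measure that forces $u$ onto an explicit profile vanishing at a preassigned space-time point. Throughout I use the mild formulation
\begin{equation*}
u(t,x) = v(t,x) + \int_0^t\!\!\int_{\mathbf{I}}G(t-s,x-y)\bigl[u^{-\alpha}(s,y) + g(u(s,y))\dot W(s,y)\bigr]dy\,ds,
\end{equation*}
where $v$ is the free wave evolution of $(u_0,u_1)$ and $G(t,x) = \tfrac{1}{2}\mathbf{1}_{\{|x|<t\}}$ is periodized on $\mathbf{I}$.

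First, I would introduce $\tau_\epsilon = \inf\{t>0:\min_{x\in\mathbf{I}}u(t,x)\leq\epsilon\}$ for $\epsilon>0$ small. Up to $\tau_\epsilon$ the nonlinearity $u^{-\alpha}$ is bounded by $\epsilon^{-\alpha}$, so classical SPDE well-posedness applies. Because $\tau_\epsilon\uparrow\tau_\infty$ as $\epsilon\downarrow 0$, it suffices to produce $T_0,\eta>0$, both independent of $\epsilon$, with $\mathbf{P}(\tau_\epsilon\leq T_0)\geq\eta$.

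Second, I would pick a deterministic profile $\phi(t,x)\geq 0$ on $[0,T_0]\times\mathbf{I}$ matching the initial data $(u_0,u_1)$ at $t=0$, with an isolated zero at a chosen $(T_0,x_0)$, and agreeing with the free wave evolution of $(u_0,u_1)$ outside a small ball around $(T_0,x_0)$. Setting
\begin{equation*}
h(t,x) = \frac{\partial_t^2\phi(t,x) - \Delta\phi(t,x) - \phi^{-\alpha}(t,x)}{g(\phi(t,x))},
\end{equation*}
$\phi$ is the unique (deterministic) solution of $\partial_t^2\phi=\Delta\phi+\phi^{-\alpha}+g(\phi)h$. Girsanov with density $Z=\exp(\int h\,dW-\tfrac{1}{2}\|h\|_2^2)$ gives a measure $\tilde{\mathbf{P}}$ under which $\dot{\tilde W}=\dot W-h$ is white noise and
\begin{equation*}
\partial_t^2 u = \Delta u + u^{-\alpha} + g(u)h + g(u)\dot{\tilde W}.
\end{equation*}
A standard $L^p$ Green's function estimate applied to $u-\phi$ (which solves a stochastic wave equation with bounded coefficients) yields $\tilde{\mathbf{P}}(\sup_{[0,T_0]\times\mathbf{I}}|u-\phi|<\epsilon)\geq \tfrac{1}{2}$ for $\epsilon$ small; on this event $\tau_\epsilon\leq T_0$ because $\phi(T_0,x_0)=0$. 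Pulling back via Girsanov gives $\mathbf{P}(\tau_\epsilon\leq T_0)\geq e^{-\|h\|_2^2/2}\tilde{\mathbf{P}}(\tau_\epsilon\leq T_0,\,\int h\,d\tilde W\leq 0)$, which is bounded below uniformly in $\epsilon$ as long as $\|h\|_2^2$ stays finite.

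The main obstacle is exactly this uniform $L^2$-control of $h$. By construction $h$ is supported near $(T_0,x_0)$, and the only singular contribution comes from $\phi^{-\alpha}/g(\phi)$. Arranging $\phi$ to vanish linearly in the Euclidean distance $r = \sqrt{(T_0-t)^2+(x-x_0)^2}$ to the chosen zero, one has $\|h\|_2^2 \sim \int_0^R r^{-2\alpha}\cdot r\,dr = \int_0^R r^{1-2\alpha}\,dr$, which is finite if and only if $\alpha<1$. This is the crux of the argument and matches the scaling heuristic: at scale $\epsilon$ the wave fluctuations from the noise are of order $\epsilon$ while the positive drift integrated over a unit light-cone contributes order $\epsilon^{2-\alpha}$, so the noise overcomes the drift precisely when $\alpha<1$.
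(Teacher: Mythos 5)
Your overall strategy---use Girsanov to handle the singular drift, with the Novikov-type condition $\int \phi^{-2\alpha}$ finite being the crux, and the computation $\int_0^R r^{-2\alpha}\,r\,dr<\infty$ iff $\alpha<1$ identifying the threshold---is the right heuristic, and it is essentially the same mechanism that drives the paper's proof. But there is a genuine gap at the step where you claim that a ``standard $L^p$ Green's function estimate'' gives $\tilde{\mathbf{P}}(\sup_{[0,T_0]\times\mathbf{I}}|u-\phi|<\epsilon)\geq\tfrac12$ for $\epsilon$ small. Under $\tilde{\mathbf{P}}$ the difference $u-\phi$ is still driven by the full-strength noise $g(u)\dot{\tilde W}$ with $g\geq c_g>0$; there is no small parameter, so this is a small-ball probability that tends to $0$ as $\epsilon\downarrow 0$, not a quantity bounded below by $\tfrac12$. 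Since your reduction requires $\mathbf{P}(\tau_\epsilon\leq T_0)\geq\eta$ \emph{uniformly} in $\epsilon$, and your only mechanism for forcing $\tau_\epsilon\leq T_0$ is the event $\{\sup|u-\phi|<\epsilon\}$, the uniform lower bound evaporates exactly where you need it. A secondary but also fatal point: the equation for $u-\phi$ contains the term $u^{-\alpha}-\phi^{-\alpha}$, which is neither bounded nor Lipschitz near the zero of $\phi$, so the difference equation does not have ``bounded coefficients''; controlling $u^{-\alpha}$ requires already knowing $u$ tracks $\phi$, which is circular precisely in the region where $\phi$ vanishes.

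For contrast, the paper runs the Girsanov transformation in the opposite direction: it removes the drift from $u$, reducing the problem to showing (a) that the drift-free solution $v$ of (\ref{eq:stoch-wave-no-drift}) hits zero with positive probability (Lemma \ref{lem:v-hit-0}, a soft argument via a constant-drift Girsanov shift of the spatial average, a martingale time change, and the reflection principle), and (b) that the Novikov functional $\int_0^{\tau^{(v)}\wedge T}\int_0^J v^{-2\alpha}\,dx\,dt$ is almost surely finite (Lemma \ref{lem:lower_main}). Step (b) is the analogue of your $\int r^{1-2\alpha}\,dr<\infty$ computation, but for the \emph{random} field $v$ rather than a hand-built profile; proving it occupies most of Section \ref{sec:pf-thm-1} and requires a multiscale decomposition of the level sets $\{2^{-n-1}K<v\leq 2^{-n}K\}$, local regularity and conditional Gaussian estimates (Lemma \ref{lem:lower_regularity}), and a counting/coupling argument (Lemmas \ref{lem:lower_counting} and \ref{lem:lower_initial}), with $\alpha<1$ entering in the convergence of $\sum_n 2^{(2\alpha-2+4\epsilon)n}$. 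To salvage your route you would need, at minimum, a support theorem or small-ball estimate for the stochastic wave equation with the singular drift, together with a way to decouple the smallness of $u-\phi$ from the blow-up of $u^{-\alpha}$; neither is supplied by the proposal.
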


Here is the plan for the paper.  In Section \ref{sec:technicalities} we give 
a rigorous formulation of (\ref{eq:stoch-wave});  in particular, the 
solution is only defined up to the first time $t$ that $u(t,x)=0$ for some 
$x$, since $u^{-\alpha}(t,x)$ blows up there.  The same is true for the 
stochastic heat equation discussed earlier.  In Section \ref{sec:pf-thm-2} 
we prove Theorem \ref{th:2}, and in Section \ref{sec:pf-thm-1} we prove 
Theorem \ref{th:1}.

Note the gap between $\alpha<1$ and $\alpha>3$.  Since there is no 
comparison principle for the wave equation, we cannot be certain that there 
exists a critical value $\alpha_0$ such that $u$ can hit 0 for 
$\alpha<\alpha_0$ but not for $\alpha>\alpha_0$.  We strongly believe in the 
existence of such a critical value, but we leave the existence and 
identification of $\alpha_0$ as an open problem.  

\section{Technicalities}
\label{sec:technicalities}

\subsection{Rigorous Formulation of the Wave SPDE}

For the most part we follow Walsh \cite{wal86} although we could also use the 
formulation found in Da Prato and Zazbczyk \cite{dz92}.

First we recall the definition the one-dimensional wave kernel on $x\in\mathbf{R}$.  
\begin{equation*}
S(t,x)=\frac{1}{2}\mathbf{1}(|x|\leq t)
\end{equation*}
See \cite{eva98} for this classical material.  If we regard $S(t,x)$ as a 
Schwartz distribution, then for $t\geq0$ we can write 
\begin{equation*}
\partial_tS(t,x)=\frac{1}{2}\delta(x-t)+\frac{1}{2}\delta(x+t).
\end{equation*}
From now on, we interpret such expressions as Schwartz distributions.  

Now we switch to the circle $x\in\mathbf{I}$, as defined earlier.  
It is also a classical result that for $x\in\mathbf{I}$, the wave kernel 
$S_\mathbf{I}$ 
and its time derivative are given by
\begin{align*}
S_\mathbf{I}(t,x)&=\sum_{n\in\mathbf{Z}}S(t,x+nJ)  \\
\partial_tS_\mathbf{I}(t,x)&=\frac{1}{2}\sum_{n\in\mathbf{Z}}\Big(\delta(nJ+x-t)+\delta(nJ+x+t)\Big).
\end{align*}
Again, we regard $\partial_tS_\mathbf{I}$ as a Schwartz distribution.  

Let $w(t,x)$ be the solution of the linear deterministic wave equation on $x\in\mathbf{I}$,
with the same initial data as $u$.  That is,
\begin{align*}
\partial_t^2 w(t,x) &=\Delta w(t,x)  \\
w(0,x)&=u_0(x) \\
\partial_tw(0,x)&=u_1(x)  
\end{align*}
with periodic boundary conditions, so that
\begin{align*}
w(t,x) &= \int_{0}^{J}\Big(\partial_tS_\mathbf{I}(t,x-y)u_0(y)
 +S_\mathbf{I}(t,x-y)u_1(y)\Big)dy   \\
&= \frac{1}{2}\int_{0}^{J}\Big(u_0(x-t-y)+u_0(x+t-y)
 +S_\mathbf{I}(t,x-y)u_1(y)\Big)dy 
\end{align*}
where expressions such as $x-y$ and $x-t-y$ are interpreted using arithmetic
modulo J.  We note that by Assumptions (i) and (iii), we can conclude that 
$w(t,x)$ is H\"older continuous of order $1/2$ in $(t,x)$ jointly.  

Using Duhamel's principle, if $u^{-\alpha}$ and 
$g(u(s,y))\dot{W}$ were smooth, we could write
\begin{align}
\label{eq:stoch-wave-int-eq}
u(t,x)=&w(t,x)+\int_{0}^{t}\int_{0}^{J}S_\mathbf{I}(t-s,x-y)u(s,y)^{-\alpha}dyds  \\
& +\int_{0}^{t}\int_{0}^{J}S_\mathbf{I}(t-s,x-y)g(u(s,y))W(dyds).  \nonumber
\end{align}
If $u^{-\alpha}$ had no singularities, we could use this \textit{mild form}
to give rigorous meaning to (\ref{eq:stoch-wave}), where we define
final double integral using Walsh's theory of martingale measures, see 
\cite{wal86}.  One could also use the Hilbert space theory given in Da Prato 
and Zabczyk \cite{dz92}.

To deal with the singularity of $u^{-\alpha}$, we use truncation and then 
take the limit as the truncation is removed.  For $N=1,2,\ldots$ define 
$u_N(t,x)$ as the solution of
\begin{align}
\label{eq:stoch-wave-truncated}
u_N(t,x)
=&w(t,x)+\int_{0}^{t}\int_{0}^{J}S_\mathbf{I}(t-s,x-y)
 \Big[u_N(s,y)\vee(1/N)\Big]^{-\alpha}dyds   \nonumber\\
&+ \int_{0}^{t}\int_{0}^{J}S_\mathbf{I}(t-s,x-y)g(u_N(s,y))W(dyds).
\end{align}
Here $a\vee b=\max(a,b)$.
Note that if $\alpha>0$, then $[u\vee(1/N)]^{-\alpha}$ is a Lipschitz 
function of $u$.  It is well known that SPDE such as 
(\ref{eq:stoch-wave-truncated}) with Lipschitz coefficients have unique strong 
solutions valid for all time, see \cite{wal86}, Chapter III.  It follows for 
each $N=1,2,\ldots$ that (\ref{eq:stoch-wave-truncated}) has a 
unique strong solution $u_N$ valid for all $t\geq0, x\in[0,J]$.  

Now let
\begin{equation*}
\tau_N=\inf\Big\{t>0: \inf_{x\in[0,J]}u_N(t,x)\leq1/N\Big\}.
\end{equation*}
From the definition, we see that almost surely
\begin{equation*}
u_{N_1}(t,x)=u_{N_2}(t,x)
\end{equation*}
for all $t\in[0,\tau_{N_1}\wedge\tau_{N_2})$ and $x\in\mathbf{I}$.  Here 
$a\wedge b=\min(a,b)$.  It also follows that 
$\tau_1\leq\tau_2\leq\cdots$ and so we can almost surely define
\begin{equation*}
\tau=\sup_N\tau_N.  
\end{equation*}
We allow the possibility that $\tau=\infty$.  Note that this definition 
of $\tau$ is consistent with the definition given in the introduction.  

So, for $t<\tau$ and $x\in\mathbf{I}$, we can define 
\begin{equation*}
u(t,x)=\lim_{N\to\infty}u_N(t,x)
\end{equation*}
since for $t<\tau$ and $x\in\mathbf{I}$ the sequence 
$u_1(t,x),u_2(t,x),\ldots$ does not vary with $N$ after a finite number of 
terms.  It follows that $u(t,x)$ satisfies (\ref{eq:stoch-wave-int-eq}) for 
$0\leq t<\tau$.  

Finally, we define $u(t,x)$ for all times $t$ by defining 
\begin{equation*}
u(t,x)=\mathbf{\Delta}
\end{equation*}
for $t\geq\tau$.  Here $\mathbf{\Delta}$ is a cemetary state.  

\subsection{Multi-parameter Girsanov Theorem}

The proof of Theorem \ref{th:1} is based on Girsanov's theorem for 
two-parameter white noise.  This approach was used earlier in Mueller and 
Pardoux \cite{mp99} for the stochastic heat equation, but we need to do some 
work to adapt the argument to the stochastic wave equation.  Girsanov's 
theorem will allow us to remove the drift from our equation 
(\ref{eq:stoch-wave}), at least up to time $\tau$.  If this Girsanov 
transformation gives us an absolutely continuous change of probability 
measure, then we only need to verify that the stochastic wave equation 
(\ref{eq:stoch-wave}) without the drift has a positive probability of 
hitting 0.

Assume that our white noise 
$\dot{W}(t,x)$ and hence also $u(t,x)$ is defined on a probability space 
$(\Omega,\mathcal{F},\mathbf{P})$.  As in Walsh \cite{wal86}, we define 
$\dot{W}(t,x)$ in terms of a random set function $W(A,\omega)$ on measurable 
sets $A\subset[0,\infty)\times\mathbf{I}$.  Let $(\mathcal{F}_t)_{t\geq0}$ 
be the filtration defined by
\begin{equation*}
\mathcal{F}_t=\sigma(W(A): A\subset[0,t]\times\mathbf{I}).
\end{equation*}

Nualart and Pardoux \cite{np94} give the following version of Girsanov's 
theorem.  
\begin{theorem} 
\label{th:girsanov}
Let $T>0$ be a given constant, and define the probability measure 
$\mathbf{P}_T$ to be $\mathbf{P}$ restricted to sets in $\mathcal{F}_T$. 
Suppose that $W$ is a space-time white
noise random measure on $[0,T]\times\mathbf{R}$ with respect to 
$\mathbf{P}_T$, and that $h(t,x)$ is a predictable process such that the 
exponential process
\begin{equation*}
\mathcal{E}_h(t)=\exp{\left(\int_0^t\int_\mathbf{R}h(s,y)W(dyds)
 -\frac{1}{2}\int_0^t\int_\mathbf{R}h(s,y)^2dyds\right)}
\end{equation*}
is a martingale for $t\in[0,T]$. Then the measure
\begin{equation} 
\label{eq:girsanov_measure}
\tilde{W}(dx dt) = W(dx dt) - h(t, x) \ dx dt
\end{equation}
is a space-time white noise random measure on $[0,T]\times\mathbf{R}$ with 
respect to the probability measure $\mathbf{Q}_T$, where $\mathbf{Q}_T$ and 
$\mathbf{P}_T$ are 
mutually absolutely continuous and
\begin{equation} 
\label{eq:girsanov_derivative}
d\mathbf{Q}_T = \mathcal{E}_h(T) \ d\mathbf{P}_T.
\end{equation}
\end{theorem}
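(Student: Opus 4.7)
The plan is to adapt the classical Cameron--Martin--Girsanov argument to the white-noise setting. Since $\mathcal{E}_h$ is assumed to be a martingale on $[0,T]$ with $\mathcal{E}_h(0)=1$, we immediately get $E_{\mathbf{P}_T}[\mathcal{E}_h(T)]=1$, so (\ref{eq:girsanov_derivative}) defines a probability measure; positivity of the exponential then yields mutual absolute continuity of $\mathbf{P}_T$ and $\mathbf{Q}_T$ on $\mathcal{F}_T$.

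The substantive task is to show that $\tilde W$, defined by (\ref{eq:girsanov_measure}), is a space--time white noise under $\mathbf{Q}_T$. For any deterministic $\phi\in L^2([0,T]\times\mathbf{R})$, set $\tilde W(\phi)=W(\phi)-\langle h,\phi\rangle$, where $W(\phi)=\int_0^T\!\int_\mathbf{R}\phi\,W(dy\,ds)$ and $\langle h,\phi\rangle=\int_0^T\!\int_\mathbf{R}h\phi\,dy\,ds$. It suffices to establish that $\tilde W(\phi)\sim N(0,\|\phi\|_{L^2}^2)$ under $\mathbf{Q}_T$ and that finite linear combinations are jointly Gaussian with the right covariance; the extension to a random set function on measurable subsets is then automatic from the Walsh framework via density and the $L^2$ isometry.

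The engine of the computation is an algebraic identity between exponentials. For $\theta\in\mathbf{R}$, consider the complex-parameter exponential $\mathcal{E}_{h+i\theta\phi}(T)$. Expanding $(h+i\theta\phi)^2=h^2+2i\theta h\phi-\theta^2\phi^2$ and rearranging yields
\begin{equation*}
\mathcal{E}_h(T)\,e^{i\theta W(\phi)}
 =\mathcal{E}_{h+i\theta\phi}(T)\,
   \exp\!\left(i\theta\langle h,\phi\rangle-\tfrac{\theta^2}{2}\|\phi\|_{L^2}^2\right).
\end{equation*}
Taking $\mathbf{P}_T$-expectations, using $E_{\mathbf{P}_T}[\mathcal{E}_{h+i\theta\phi}(T)]=1$, and then rewriting the result via $d\mathbf{Q}_T=\mathcal{E}_h(T)\,d\mathbf{P}_T$, we obtain $E_{\mathbf{Q}_T}[e^{i\theta\tilde W(\phi)}]=e^{-\theta^2\|\phi\|_{L^2}^2/2}$, which identifies the law of $\tilde W(\phi)$. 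The multivariate version follows verbatim after replacing $\theta\phi$ with $\sum_k\theta_k\phi_k$.

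The main obstacle is rigorously verifying $E_{\mathbf{P}_T}[\mathcal{E}_{h+i\theta\phi}(T)]=1$, since the integrand is complex-valued and the usual positive-martingale arguments do not apply directly. I would handle this either by localizing with stopping times that bound $h$, invoking the finite-dimensional Girsanov theorem on each localization and passing to the limit using the global martingale hypothesis on $\mathcal{E}_h$, or by an analytic continuation argument: the map $z\mapsto E_{\mathbf{P}_T}[\mathcal{E}_{h+z\phi}(T)]$ is holomorphic, equals $1$ for $z$ in a real neighborhood of the origin (by applying the martingale property to the shifted exponential), and hence equals $1$ at $z=i\theta$. A subsidiary technicality is that $h$ is only predictable, not deterministic, which forces one to use the It\^o--Walsh stochastic calculus throughout rather than purely Gaussian computations; this is precisely where the martingale hypothesis on $\mathcal{E}_h$ does the essential work in lieu of a Novikov-type sufficient condition.
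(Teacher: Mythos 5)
The paper does not prove Theorem \ref{th:girsanov} at all: it is quoted verbatim from Nualart and Pardoux \cite{np94} and used as a black box, so there is no in-paper argument to compare yours against. On its own merits, your outline is the standard proof of Girsanov in the white-noise setting and is essentially sound: the martingale hypothesis gives $\mathbf{E}_{\mathbf{P}_T}[\mathcal{E}_h(T)]=1$ and hence that $\mathbf{Q}_T$ is a probability measure, strict positivity of the exponential gives equivalence, and the characteristic-functional computation via the algebraic identity $\mathcal{E}_h(T)e^{i\theta \tilde W(\phi)}=\mathcal{E}_{h+i\theta\phi}(T)e^{-\theta^2\|\phi\|^2/2}$ correctly reduces everything to showing $\mathbf{E}_{\mathbf{P}_T}[\mathcal{E}_{h+i\theta\phi}(T)]=1$. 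You are also right that joint Gaussianity of finitely many $\tilde W(\phi_k)$ plus the $L^2$ isometry suffices to identify $\tilde W$ as a white noise measure.

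The one step I would push back on is your proposed analytic-continuation route for $\mathbf{E}_{\mathbf{P}_T}[\mathcal{E}_{h+z\phi}(T)]=1$. For real $z\neq 0$ the hypothesis does not give you that $\mathcal{E}_{h+z\phi}$ is a martingale (only $\mathcal{E}_h$ is assumed to be one), and holomorphy of $z\mapsto \mathbf{E}[\mathcal{E}_{h+z\phi}(T)]$ off the imaginary axis requires integrability of $\mathcal{E}_h(T)e^{cW(\phi)}$, which is not guaranteed; so that variant has a real gap. The clean fix is a domination argument rather than continuation or localization: $\mathcal{E}_{h+i\theta\phi}(t)$ is a complex local martingale (by the It\^o--Walsh formula its differential is $\mathcal{E}_{h+i\theta\phi}(h+i\theta\phi)\,dW$), and
\begin{equation*}
\left|\mathcal{E}_{h+i\theta\phi}(t)\right|
=\mathcal{E}_h(t)\,\exp\!\left(\tfrac{\theta^2}{2}\int_0^t\!\!\int_{\mathbf{R}}\phi^2\,dy\,ds\right)
\le e^{\theta^2\|\phi\|_{L^2}^2/2}\,\mathcal{E}_h(t),
\end{equation*}
so it is a local martingale dominated by (a constant times) the uniformly integrable martingale $\mathcal{E}_h$, hence of class (D) and a true martingale with expectation $\mathcal{E}_{h+i\theta\phi}(0)=1$. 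With that substitution your argument closes, and it matches the Nualart--Pardoux proof in spirit.
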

We recall Novikov's sufficient condition for $\mathcal{E}_h(t)$ to be a 
martingale.  
\begin{prop}
\label{prop:girsanov}
Let $h(t,x)$ be a predictable process with respect to the filtration 
$(\mathcal{F}_t)_{t\in[0,T]}$. If
\begin{equation} 
\label{eq:novikov}
\mathbf{E}\left[\exp{\left(\frac{1}{2}\int_0^T\int_\mathbf{R}h(s,y)^2dyds\right)}\right]<\infty
\end{equation}
then $\mathcal{E}_h(t)$ is a uniformly integrable $\mathcal{F}_t$-martingale 
for $0\leq t\leq T$.
\end{prop}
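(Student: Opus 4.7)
This is a standard Novikov sufficient condition, transferred from the one-parameter Brownian setting to Walsh's two-parameter white noise. Set
\begin{equation*}
M_t := \int_0^t \int_\mathbf{R} h(s,y) \, W(dy\,ds), \qquad A_t := \int_0^t \int_\mathbf{R} h(s,y)^2 \, dy\,ds,
\end{equation*}
so that $M$ is a continuous $(\mathcal{F}_t)$-local martingale in the sense of Walsh \cite{wal86} with quadratic variation $A$, and $\mathcal{E}_h(t) = \exp(M_t - A_t/2)$. The plan is to prove $\mathcal{E}_h$ is a true $(\mathcal{F}_t)$-martingale on $[0,T]$; uniform integrability on this closed interval then follows automatically, since $\mathcal{E}_h(t) = \mathbf{E}[\mathcal{E}_h(T) \mid \mathcal{F}_t]$ exhibits each $\mathcal{E}_h(t)$ as a conditional expectation of a single integrable random variable.

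First I would apply the Walsh-It\^o formula to $f(x,a) = e^{x - a/2}$ to obtain
\begin{equation*}
\mathcal{E}_h(t) = 1 + \int_0^t \int_\mathbf{R} \mathcal{E}_h(s) h(s,y) \, W(dy\,ds),
\end{equation*}
so $\mathcal{E}_h$ is a continuous nonnegative local martingale, and in particular a supermartingale; thus it suffices to prove $\mathbf{E}[\mathcal{E}_h(T)] = 1$. Introduce the localizing sequence $\tau_n := T \wedge \inf\{t \geq 0 : A_t \geq n\}$. Hypothesis (\ref{eq:novikov}) implies $A_T < \infty$ almost surely, so $\tau_n \to T$ a.s.; moreover, because $A_{\cdot \wedge \tau_n} \leq n$, each $\mathcal{E}_h(\cdot \wedge \tau_n)$ is a bounded-quadratic-variation $L^2$-martingale, so $\mathbf{E}[\mathcal{E}_h(\tau_n)] = 1$ and $\mathcal{E}_h(\tau_n) \to \mathcal{E}_h(T)$ almost surely.

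The main obstacle is upgrading this almost-sure convergence to $L^1$ convergence, i.e., establishing uniform integrability of the family $\{\mathcal{E}_h(\tau_n)\}_n$; once this is done, $\mathbf{E}[\mathcal{E}_h(T)] = \lim_n \mathbf{E}[\mathcal{E}_h(\tau_n)] = 1$. I would handle this via the classical Novikov/Kazamaki argument. The key preliminary observation is the Cauchy-Schwarz estimate
\begin{equation*}
\mathbf{E}\bigl[e^{M_\tau/2}\bigr]
 = \mathbf{E}\bigl[\mathcal{E}_h(\tau)^{1/2}\,e^{A_\tau/4}\bigr]
 \leq \mathbf{E}[\mathcal{E}_h(\tau)]^{1/2}\,\mathbf{E}\bigl[e^{A_\tau/2}\bigr]^{1/2}
 \leq \mathbf{E}\bigl[e^{A_T/2}\bigr]^{1/2},
\end{equation*}
valid for every bounded stopping time $\tau \leq T$, which together with (\ref{eq:novikov}) reduces Novikov's condition to the Kazamaki bound $\sup_\tau \mathbf{E}[e^{M_\tau/2}] < \infty$. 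The standard proof of Kazamaki's criterion, which decomposes $\mathcal{E}_h^p$ for $p$ slightly less than $1$ into the product of a nonnegative supermartingale and a bounded function of $(M, A)$ and then applies H\"older's inequality to obtain an $L^p$-bound on $\mathcal{E}_h(\tau_n)$, then yields uniform integrability via de la Vall\'ee-Poussin. Because every step relies only on the nonnegative local-martingale property, path continuity, and the Walsh-It\^o formula, all of which hold verbatim in the white-noise setting, the classical proof transfers with no essential modification.
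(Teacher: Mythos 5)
The paper does not prove this proposition at all: it is stated as a recalled, standard result (Novikov's criterion, quoted alongside the Nualart--Pardoux form of Girsanov's theorem), so there is no in-paper argument to compare against. Judged on its own, your proof is the classical Novikov argument and its outline is sound: the It\^o/Walsh formula exhibits $\mathcal{E}_h$ as a nonnegative continuous local martingale, hence a supermartingale, so everything reduces to $\mathbf{E}[\mathcal{E}_h(T)]=1$; the Cauchy--Schwarz step correctly reduces the Novikov hypothesis to the Kazamaki bound $\sup_\tau\mathbf{E}[e^{M_\tau/2}]\leq\mathbf{E}[e^{A_T/2}]^{1/2}<\infty$; and the $L^p$-bound for $p<1$ gives the needed uniform integrability of $\{\mathcal{E}_h(\tau_n)\}$. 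Two small remarks. First, your description of the Kazamaki step is slightly off: in the standard decomposition $\mathcal{E}_h(t)^{a^2}=\mathcal{E}_{ah}(t)\exp(-a(1-a)M_t)$ (equivalently $\mathcal{E}_{ah}=\mathcal{E}_h^{a^2}\exp(a(1-a)M)$) the exponential factor is \emph{not} bounded; it is controlled by H\"older's inequality together with the Kazamaki bound on $\mathbf{E}[\exp(\tfrac{a}{1+a}M_\tau)]$, so you should not assert a "bounded function of $(M,A)$." Second, the whole transfer to the two-parameter setting can be short-circuited: $M_t=\int_0^t\int h\,W(dy\,ds)$ is an ordinary one-parameter continuous $(\mathcal{F}_t)$-local martingale with $\langle M\rangle_t=A_t$, so the classical Novikov theorem for continuous local martingales (e.g.\ Revuz--Yor \cite{ry99}, Ch.~VIII) applies verbatim and there is no need to redo the Kazamaki argument in the white-noise setting; this is presumably why the paper simply cites the result.
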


\subsection{H\"older continuity of the stochastic convolution}

For an almost surely bounded predictable process $\rho(t,x)$, we define the 
stochastic convolution as follows.  
\begin{equation*}
N_\rho(t,x)=\int_{0}^{t}\int_{0}^{J}S_\mathbf{I}(t-s,x-y)\rho(s,y)W(dyds).
\end{equation*}
Note that the double integral in (\ref{eq:stoch-wave-int-eq}) is equal to 
$N_{g(u)}(t,x)$ for $t<\tau$.  We conveniently define 
$g(\mathbf{\Delta})=0$, so that $N_{g(u)}(t,x)$ is defined for 
all time.  

The proofs of both main theorems depend on the H\"older continuity of 
$N_{g(u)}(t,x)$.  Although such results are common in the SPDE 
literature, unfortunately we could not find the exact result we needed.  
So for completeness, we state it here.  

\begin{theorem}
\label{th:holder}
Let $\rho(t,x)$ be an almost surely bounded predictable process.  
For any $T>0$ and $\beta<1/2$, there exists a random variable $Y$ with 
finite expectation, with $\mathbf{E}|Y|$ depending only on $\beta$ and $T$, such that
\begin{equation}
\label{eq:holder}
\left|N_\rho(t+h,x+k)-N_\rho(t,x)\right|\leq Y\left(h^\beta+k^\beta\right)
\end{equation}
almost surely for all $h,k$ where $t,t+h\in[0,T]$.
\end{theorem}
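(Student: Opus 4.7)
The plan is to establish the bound via moment estimates followed by the Kolmogorov--Chentsov continuity criterion. Since $\rho$ is almost surely bounded, say by a constant $R$, the Burkholder--Davis--Gundy (BDG) inequality applied to Walsh's martingale-measure integral reduces the problem to a deterministic $L^2$ bound on a wave kernel difference. Concretely, for any integer $p\geq 1$,
\begin{equation*}
\mathbf{E}\bigl|N_\rho(t+h,x+k)-N_\rho(t,x)\bigr|^{2p}
\leq C_p R^{2p}\,\mathcal{D}(h,k;t,x)^p,
\end{equation*}
where $\mathcal{D}(h,k;t,x)=\int_0^{t+h}\!\int_0^J\Phi(s,y)^2\,dy\,ds$ and
$\Phi(s,y)=S_\mathbf{I}(t+h-s,x+k-y)\mathbf{1}_{\{s\leq t+h\}}-S_\mathbf{I}(t-s,x-y)\mathbf{1}_{\{s\leq t\}}$.

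The crux is to show that $\mathcal{D}(h,k;t,x)\leq C_T(|h|+|k|)$ uniformly for $t,t+h\in[0,T]$. I would split $\Phi$ into a purely spatial increment and a purely temporal increment and estimate each using the explicit formula $S_\mathbf{I}(t,x)=\tfrac{1}{2}\sum_{n\in\mathbf{Z}}\mathbf{1}(|x+nJ|\leq t)$, which for $t\leq T$ is a finite sum whose number of terms depends only on $T$ and $J$. For the spatial increment at fixed $t-s$, the squared difference of two displaced indicator slabs equals $\tfrac{1}{4}$ times the indicator of their symmetric difference, whose Lebesgue measure on the circle is at most $\min(2|k|,4(t-s))$; integrating in $s$ produces a bound of order $|k|\,T$. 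The temporal increment is analogous: the difference of indicators is supported in an annulus of width $h$, contributing order $h\,T$, while the additional strip $s\in[t,t+h]$ contributes only a lower-order $h^2$ term.

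Combining BDG with the kernel estimate yields
$\mathbf{E}|N_\rho(t+h,x+k)-N_\rho(t,x)|^{2p}\leq C_{p,T}R^{2p}(|h|+|k|)^p$.
Since the parameter space $(t,x)$ is two-dimensional, the Kolmogorov--Chentsov theorem (in its Garsia--Rodemich--Rumsey form) then furnishes, for every $p>2$, an almost surely finite random variable $Y$ with $\mathbf{E}|Y|<\infty$ such that $|N_\rho(t+h,x+k)-N_\rho(t,x)|\leq Y\bigl(h^{(p-2)/(2p)}+k^{(p-2)/(2p)}\bigr)$. As $p\to\infty$ the exponent approaches $1/2$, so any target $\beta<1/2$ is reached by choosing $p$ sufficiently large; the dependence of $\mathbf{E}|Y|$ on $\beta$ and $T$ comes through this choice of $p$ and the kernel constant $C_T$.

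The main obstacle is the kernel computation itself: the wave kernel is merely an indicator function, so smoothness-based techniques do not apply, and one is forced into an elementary but careful case analysis separating the regimes $|k|\leq t-s$ from $|k|>t-s$ (and analogously for $h$), since the naive bound $\min(2|k|,4(t-s))\leq 2|k|$ is only sharp when $t-s$ is not too small. The periodization on the circle adds bookkeeping because finitely many shifted indicators may overlap once $t$ exceeds $J/2$, but for $t\leq T$ the number of relevant shifts is bounded by a constant depending only on $T$ and $J$, so the overlap only inflates the bound by a multiplicative constant.
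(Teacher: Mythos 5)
Your proposal is correct and follows essentially the same route as the paper: Burkholder/BDG moment estimates, an explicit computation with the indicator form of the wave kernel showing the $L^2$ kernel difference is of order $|h|+|k|$ (symmetric difference of displaced slabs for the spatial part, an annulus of width $h$ plus an $O(h^2)$ strip for the temporal part), and then the two-parameter Kolmogorov continuity theorem with exponent $1/2-O(1/p)$. The only cosmetic difference is that you pull the deterministic bound on $\rho$ out of the quadratic variation directly, whereas the paper separates $\rho$ from the kernel via H\"older's inequality with exponents $p/2$ and $p/(p-2)$; both yield the same estimate.
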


We will prove Theorem \ref{th:holder} in the appendix.  

\section{Proof of Theorem \ref{th:2}}
\label{sec:pf-thm-2}

\subsection{Outline and Preliminaries}
We write the mild solution to (\ref{eq:stoch-wave}) in the following form:
\begin{equation} 
\label{eq:wave_drifted_sol_short}
u(t,x)=V_u(t,x)+D_u(t,x)+N_u(t,x)
\end{equation}
where
\begin{align*}
V_u(t,x)&=\frac{1}{2}\big(u_0(x+t)+u_0(x-t)\big)+\int_0^Ju_1(y)S_{\mathbf{I}}(t,x-y)dy\\
D_u(t,x)&=\int_0^t\int_0^Ju(s,y)^{-\alpha}S_{\mathbf{I}}(t-s,x-y)dyds\\
N_u(t,x)&=\int_0^t\int_0^Jg(u(s,y))S_{\mathbf{I}}(t-s,x-y)W(dyds).
\end{align*}

We will prove Theorem \ref{th:2} by contradiction. First, we assume that 
$\tau<\infty$ with positive probability. Then, on the sample paths where 
this is the case (i.e., all $u(\omega)$ such that $\tau(\omega)<\infty$),
we go backwards in time from where $u$ hits zero. The upward drift
term $D_u(t,x)$ will then push downwards, since we are going backwards in 
time. We show that this downward push must overwhelm the modulus of 
continuity of the $N_u(t,x)$ term, implying the existence of another time 
$\tau_1<\tau$ such that $ u $ hits zero at $\tau_1$.
However, this contradicts the minimality of $\tau$, thus proving the theorem.

\subsection{A Regularity Lemma}
Let $\mathbf{A}=\{\tau<\infty\}$. By assumption, $\mathbf{P}(\mathbf{A})>0 $. 
We then show the following:

\begin{lemma} 
\label{lem:upper_holder}
On the event $\mathbf{A}$, $V_u(t,x)+N_u(t,x)$ is almost surely $\beta$-H\"older 
continuous on $[0,\tau)\times[0,J]$ for any $\beta<1/2$. The H\"older constant 
is a random variable depending only on $\beta$ and $\omega$.
\end{lemma}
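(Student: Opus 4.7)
The plan is to handle the two summands separately, since each is already governed by tools from Section \ref{sec:technicalities}. For the deterministic piece, observe that $V_u(t,x)$ coincides with the deterministic wave solution $w(t,x)$ via d'Alembert's formula. It was already noted in Section \ref{sec:technicalities} that, under Assumptions (i) and (iii), $w$ is $1/2$-H\"older continuous jointly in $(t,x)$ on every bounded subset of $[0,\infty)\times\mathbf{I}$. Thus $V_u$ is deterministically $\beta$-H\"older for every $\beta<1/2$, with no need to restrict to $\mathbf{A}$.

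For the stochastic convolution $N_u$, the plan is to apply Theorem \ref{th:holder} with $\rho(s,y)=g(u(s,y))$. Two hypotheses must be verified. Predictability holds because $u$ is the almost sure limit of the predictable truncations $u_N$ on $[0,\tau)$, and the cemetery convention $g(\mathbf{\Delta})=0$ extends $g(u)$ measurably to all of $[0,\infty)\times\mathbf{I}$. Almost sure boundedness is immediate from Assumption (iv): $|g(u(s,y))|\leq C_g$ whenever $u(s,y)\in\mathbf{R}$, and $g(u(s,y))=0$ otherwise. Theorem \ref{th:holder} then supplies, for every $T>0$ and every $\beta<1/2$, an integrable random variable $Y_T$ for which the bound (\ref{eq:holder}) holds on $[0,T]\times[0,J]$.

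To localize to $\mathbf{A}$, write $\mathbf{A}=\bigcup_{n\geq 1}\{\tau\leq n\}$. On $\{\tau\leq n\}$, applying Theorem \ref{th:holder} with $T=n$ yields a $\beta$-H\"older bound for $N_u$ on $[0,\tau)\times[0,J]$ with an almost surely finite H\"older constant, and summing this with the bound for $V_u$ gives the claimed $\beta$-H\"older continuity of $V_u+N_u$. The only point requiring any care is the predictability and boundedness of $g(u)$ across the cemetery extension; beyond that, the argument is a direct invocation of Theorem \ref{th:holder} combined with the classical regularity of the deterministic wave solution. Notably, the upward drift term $D_u$ is conspicuously absent from this lemma, which reflects the proof strategy outlined in Section \ref{sec:pf-thm-2}: the sub-H\"older behavior of $D_u$ near $\tau$ is precisely what will clash with the H\"older regularity established here, producing the contradiction that drives the proof of Theorem \ref{th:2}.
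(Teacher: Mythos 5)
Your proposal is correct and follows essentially the same route as the paper: apply Theorem \ref{th:holder} with $\rho=g(u)$ (bounded by Assumption (iv)) to get $\beta$-H\"older continuity of $N_u$, and observe that $V_u$ is just the deterministic wave solution $w$, which is jointly $1/2$-H\"older by Assumptions (i) and (iii). Your added care about predictability across the cemetery extension and the localization via $\mathbf{A}=\bigcup_n\{\tau\leq n\}$ only makes explicit what the paper leaves implicit in saying the constant depends on $\beta$ and $\tau$.
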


\begin{proof}
Let $\beta<1/2$ be given. Then by (\ref{eq:holder}) we know that $N_u(t,x)$ 
is almost surely $\beta$-H\"older continuous on $[0,\tau)\times[0,J]$, with 
random H\"older constant $Y$ depending only on $\beta$ and $\tau$. Since $u_1$ 
is continuous on $\mathbf{I}$, the Riemann integral
\begin{equation*}
\int_0^Ju_1(y)S_{\mathbf{I}}(t,x-y)dy=\int_{x-t}^{x+t}u_1(y)dy
\end{equation*}
is jointly differentiable (and thus $\beta$-H\"older continuous) on 
$(t,x)\in[0,\tau)\times[0,J]$ as well. Finally, from assumption, $u_0$ is 
$\beta$-H\"older continuous on $[0,J]$, so it follows that
$\frac{1}{2}(u_0(x+t)+u_0(x-t))$ is continuous as well.

Thus $V_u(t,x)+N_u(t,x)$ is almost surely $\beta$-H\"older continuous on 
$[0,\tau)\times[0,J]$. As the H\"older constant of $V_u$ depends only on 
$u_0$ and $u_1$, the H\"older constant of $V_u+N_u$ is a random variable 
depending only on $\beta$.
\end{proof}

\subsection{The Backwards Light Cone}

Given $(t,x)\in\mathbf{R}_+\times\mathbf{R}$, define the \textit{backwards 
light cone} as 
\begin{equation*}
\mathbf{L}(t,x)=\left\{(s,y):\left|x-y\right|<t-s\right\}.
\end{equation*}
Note that the light cone cannot include points $(s,y)$ for which $s>t$.  
It follows that $ D_u(t, x) $ can be rewritten as
\begin{equation} 
\label{eq:upper_d}
\begin{split}
D_u(t,x)&=\int_0^t\int_0^Ju(s,y)^{-\alpha}S_{\mathbf{I}}(t-s,x-y)dyds\\
&=\int_0^t\int_{\mathbb{R}}u\left(s,y^*\right)^{-\alpha}S(t-s,x-y)dyds\\
&=\iint_{\mathbf{L}(t,x)}u\left(s,y^*\right)^{-\alpha}dyds
\end{split}
\end{equation}
where
\begin{equation} 
\label{eq:upper_wrap}
y^* = y \mod{J}.
\end{equation}
and $y^*\in[0,J]$.  

\begin{lemma} \label{lem:upper_drift}
Let $(t,x)\in[0,\tau)\times[0,J]$. Then for any $(s,y)\in\mathbf{L}(t,x)$, 
we have
\begin{equation*}
D_u(s,y)-D_u(t,x)<0.
\end{equation*}
\end{lemma}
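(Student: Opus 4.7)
The plan is to exploit the representation $D_u(t,x)=\iint_{\mathbf{L}(t,x)}u(s',y'^*)^{-\alpha}\,dy'ds'$ from equation (\ref{eq:upper_d}) and reduce the lemma to a purely geometric containment of light cones, together with positivity of the integrand.

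First I would establish the key geometric fact: if $(s,y)\in\mathbf{L}(t,x)$, then $\mathbf{L}(s,y)\subsetneq\mathbf{L}(t,x)$. This is immediate from the triangle inequality: for any $(s',y')\in\mathbf{L}(s,y)$ we have $|y-y'|<s-s'$ and, since $(s,y)\in\mathbf{L}(t,x)$, $|x-y|<t-s$, so
\begin{equation*}
|x-y'|\le |x-y|+|y-y'|<(t-s)+(s-s')=t-s',
\end{equation*}
which places $(s',y')\in\mathbf{L}(t,x)$. Strict containment follows because $|x-y|<t-s$ holds with strict slack, so a full open neighborhood of the time slice $\{s'=s\}\cap\mathbf{L}(t,x)$ lies in $\mathbf{L}(t,x)\setminus\mathbf{L}(s,y)$; this neighborhood has positive Lebesgue measure.

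Next I would use this containment to subtract:
\begin{equation*}
D_u(t,x)-D_u(s,y)=\iint_{\mathbf{L}(t,x)\setminus\mathbf{L}(s,y)}u(s',y'^*)^{-\alpha}\,dy'ds'.
\end{equation*}
Since $(t,x)\in[0,\tau)\times[0,J]$ and every $(s',y')\in\mathbf{L}(t,x)$ has $s'<t<\tau$, the wrapped value $u(s',y'^*)$ is strictly positive by the very definition of $\tau$, so the integrand $u^{-\alpha}$ is strictly positive on the entire set of integration. Combined with the positive measure of $\mathbf{L}(t,x)\setminus\mathbf{L}(s,y)$, this yields $D_u(t,x)-D_u(s,y)>0$, i.e., $D_u(s,y)-D_u(t,x)<0$.

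The only mild obstacle is bookkeeping around the spatial wrapping $y^*=y\bmod J$: the light cones are described in unwrapped coordinates $(s',y')\in\mathbf{R}_{+}\times\mathbf{R}$, and I need the integrand to remain positive after the periodic identification. Since $u(s',\cdot)$ is a strictly positive function on $[0,J]$ for every $s'<\tau$, and $y'^*\in[0,J]$ always, positivity is preserved, and no further measurability issue arises because the set $\mathbf{L}(t,x)\setminus\mathbf{L}(s,y)$ is an explicit open subset of $\mathbf{R}_{+}\times\mathbf{R}$. With these observations the lemma follows with no further estimates needed.
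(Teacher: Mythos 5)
Your proof is correct and follows essentially the same route as the paper, whose entire argument is the observation that $\mathbf{L}(s,y)\subsetneq\mathbf{L}(t,x)$ together with the strict positivity of $u$ on $[0,\tau)$. You have simply filled in the details (the triangle-inequality verification of the containment and the positive measure of the difference set) that the paper leaves implicit.
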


\begin{proof}
Since $u(s,y)>0$ on $[0,\tau)$, using (\ref{eq:upper_d}) the result follows 
from the fact that $\mathbf{L}(s,y)\subsetneq\mathbf{L}(t,x)$.
\end{proof}

\subsection{Theorem \ref{th:2}, Conclusion}
Since $\alpha>3$ by assumption, define $\epsilon\in(0,1/2)$ sufficiently 
small such that
\begin{equation*}
\frac{3-\alpha}{2}+\epsilon(\alpha+1)<0.
\end{equation*}

Using Lemma \ref{lem:upper_holder}, on the event $\mathbf{A}$ we define $Y$ 
to be a (random) $1/2-\epsilon$ H\"older constant of $V_u(t,x)+N_u(t,x)$, 
depending only on $\epsilon$. By our choice of $\epsilon$, the exponent of 
$R$ in the expression
\begin{equation*}
\frac{\pi Y^{-1-\alpha}}{2^{\alpha+2}}R^{\frac{3-\alpha}{2}+\epsilon(\alpha+1)}
\end{equation*}
is negative. Hence, on $\mathbf{A}$ we can pick a sufficiently small random 
$R>0$, depending on $\epsilon$ and $Y$, such that both
\begin{equation} 
\label{eq:upper_r}
\frac{\pi Y^{-1-\alpha}}{2^{\alpha+2}}R^{\frac{3-\alpha}{2}+\epsilon(\alpha+1)}>1
\quad\text{and}\quad R<\frac{\tau}{2}.
\end{equation}

Finally, on $\mathbf{A}$ we pick a random $\delta>0$ sufficiently small such 
that both
\begin{equation} 
\label{eq:upper_delta}
\delta<\min\left(\inf_{x\in[0,J]}u_0(x),YR^{\frac{1}{2}-\epsilon}\right)
\end{equation}
and
\begin{equation} 
\label{eq:upper_delta_tau}
\tau_{\delta}=\inf\left\{t>0:\inf_{x\in[0,J]}u(t,x)<\delta\right\}>\frac{\tau}{2},
\end{equation}
which is possible since since $u(t,x)$ is continuous in $t$ for
$t<\tau$. Here, $\tau{_\delta}$ need not be a stopping time. Note that
$\tau_{\delta}$ is the first time that $u(t,x)$ reaches $\delta$, and that 
by continuity of $u(t,x)$ in $x$, there exists some $x_{\delta}\in[0,J]$ 
such that $u\left(\tau_{\delta},x_{\delta}\right)=\delta$. We define the 
differences
\begin{align*}
\Delta V(t,x)&=V_u(t,x)-V_u\left(\tau_{\delta},x_{\delta}\right)\\
\Delta D(t,x)&=D_u(t,x)-D_u\left(\tau_{\delta},x_{\delta}\right)\\
\Delta N(t,x)&=N_u(t,x)-N_u\left(\tau_{\delta},x_{\delta}\right)
\end{align*}
and for all $(t,x)\in\mathbf{L}(\tau_{\delta},x_{\delta})$, we decompose
\begin{equation} 
\label{eq:upper_decomp}
\begin{split}
u(t,x)&=u(t,x)-u\left(\tau_{\delta},x_{\delta}\right)+\delta\\
&=\Delta V(t,x)+\Delta D(t,x)+\Delta N(t,x)+\delta.
\end{split}
\end{equation}

We recall that by construction,
\begin{equation} 
\label{eq:upper_initial_noise}
\Delta V(t,x) + \Delta N(t,x)
<Y\left|(t,x)-\left(\tau_{\delta},x_{\delta}\right)\right|^{1/2-\epsilon}
\end{equation}
almost surely on $\mathbf{A}$ with $\mathbf{E}\left[Y;\mathbf{A}\right]<\infty$. From 
Lemma \ref{lem:upper_drift}, we find that
\begin{equation*}
\Delta D(t,x)<0
\end{equation*}
almost surely. Hence, for all 
$(t,x)\in\mathbf{L}\left(\tau_{\delta},x_{\delta}\right)$ we obtain the bound
\begin{equation} 
\label{eq:upper_initial}
\begin{split}
u(t,x)&=\Delta V(t,x)+\Delta D(t,x)+\Delta N(t,x)+\delta\\
&< \Delta V(t,x) + \Delta N(t,x) + \delta \\
&<Y\left|(t,x)-\left(\tau_{\delta},x_{\delta}\right)\right|^{1/2-\epsilon}
                + \delta
\end{split}
\end{equation}
almost surely on $\mathbf{A}$. We define the sector
\begin{equation*}
B_R=\left\{(t,x)\in\mathbf{L}\left(\tau_{\delta},x_{\delta}\right)
 :\left|\left(\tau_{\delta},x_{\delta}\right)-(t,x)\right|\leq R\right\},
\end{equation*}
noting from (\ref{eq:upper_r}) and (\ref{eq:upper_delta_tau}) that $t>0$ on 
$B_R$. We then denote the curved part of the boundary of $B_R$ by
\begin{equation*}
\partial B_R=\left\{(t,x)\in B_R:
	\left|\left(\tau_{\delta},x_{\delta}\right)-(t,x)\right|=R\right\}.
\end{equation*}
Then for all $(t,x)\in\partial B_R$, using (\ref{eq:upper_d}), 
(\ref{eq:upper_initial}), and (\ref{eq:upper_delta}) we find that
\begin{equation} 
\label{eq:upper_drift}
\begin{split}
\Delta D(t,x)&=-\iint_{\mathbf{L}\left(\tau_{\delta},x_{\delta}\right)
		\setminus\mathbf{L}(t,x)}u\left(s,y^*\right)^{-\alpha}dyds\\
&\leq-\iint_{B_R}u\left(s,y^*\right)^{-\alpha}dyds\\
&\leq-\left|B_R\right|\left(YR^{\frac{1}{2}-\epsilon}+\delta\right)^{-\alpha}\\
&<-\left|B_R\right|\left(2YR^{\frac{1}{2}-\epsilon}\right)^{-\alpha}\\
&=-\frac{\pi R^2}{2^{\alpha+2}}Y^{-\alpha}R^{-\alpha\left(\frac{1}{2}-\epsilon\right)}\\
&=-\frac{\pi Y^{-\alpha}}{2^{\alpha+2}}R^{2-\left(\frac{1}{2}-\epsilon\right)\alpha}
\end{split}
\end{equation}
on the event $\mathbf{A}$. Recall that on $\partial B_R$,
$\left|(t,x)-\left(\tau_{\delta},x_{\delta}\right)\right|=R$.
Hence from
(\ref{eq:upper_decomp}), (\ref{eq:upper_initial_noise}), and 
(\ref{eq:upper_drift}) we find that for all $(t,x)\in\partial B_R$,
\begin{equation} 
\label{eq:upper_boundary}
\begin{split}
u(t,x)&<YR^{\frac{1}{2}-\epsilon}-\frac{\pi Y^{-\alpha}}{2^{\alpha+2}}
 R^{2-\left(\frac{1}{2}-\epsilon\right)\alpha}\\
&=YR^{\frac{1}{2}-\epsilon}\left(1-\frac{\pi Y^{-1-\alpha}}{2^{\alpha+2}}
 R^{2-\left(\frac{1}{2}-\epsilon\right)\alpha
 -\left(\frac{1}{2}-\epsilon\right)}\right)\\
&=YR^{\frac{1}{2}-\epsilon}\left(1-\frac{\pi Y^{-1-\alpha}}{2^{\alpha+2}}
 R^{\frac{3-\alpha}{2}+\epsilon(\alpha+1)}\right)
\end{split}
\end{equation}
almost surely on $\mathbf{A}$. From (\ref{eq:upper_r}) and 
(\ref{eq:upper_boundary}) it then follows that $u(t,x)<0 $ for all 
$(t,x)\in\partial B_R$, almost surely on $\mathbf{A}$.

Since $\mathbf{P}(\mathbf{A})>0 $ by assumption, the event that $u(t,x)<0$ for all
$(t,x)\in\partial B_R$ occurs with positive probability. However, since 
$R>0$, we know that
$t<\tau_{\delta}<\tau$ for all $(t,x)\in\partial B_R$, which is a 
contradiction, since $\tau$ is defined to be the first hitting time for 
$u(t,x)\leq0$. Hence we conclude that $\mathbf{P}(\mathbf{A})=0$.

This finishes the proof of theorem \ref{th:2}.

\section{Proof of Theorem \ref{th:1}}
\label{sec:pf-thm-1}

\subsection{Equation without the Drift}

Now we use Proposition \ref{prop:girsanov} to prove Theorem \ref{th:1}.  
Consider the stochastic wave equation with initial conditions identical to 
(\ref{eq:stoch-wave}) but without drift:
\begin{align}
\label{eq:stoch-wave-no-drift}
\partial_t^2 v(t,x)
 &=\Delta v(t,x)+g(v(t,x))\dot{W(t,x)}  \\
v(0,x)&=u_0(x) \nonumber\\
\partial_tv(0,x)&=u_1(x).   \nonumber
\end{align}
Here $x\in[0,J]$, as before.  
Since there are no singular terms in (\ref{eq:stoch-wave-no-drift}), we can 
give this equation rigorous meaning using the mild form:
\begin{equation}
\label{eq:wave_undrifted_sol}
v(t,x)=w(t,x)+\int_{0}^{t}\int_{0}^{J}S_\mathbf{I}(t-s,x-y)g(v(s,y))W(dyds).  
\end{equation}
where $w(t,x)$ is as before, the solution to the deterministic wave equation.  

First we verify that $v(t,x)$ can hit 0.
\begin{lemma}
\label{lem:v-hit-0}
Suppose that $v(t,x)$ is a solution to (\ref{eq:wave_undrifted_sol}).  Then
\begin{equation*}
\mathbf{P}(v(t,x)=0 \text{ for some $t>0,\, x\in[0,J]$})>0.
\end{equation*}
\end{lemma}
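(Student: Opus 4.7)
The plan is to apply Girsanov's theorem (Theorem \ref{th:girsanov}) once more, this time to introduce a large negative deterministic drift into the driftless equation (\ref{eq:wave_undrifted_sol}), forcing $v$ to cross zero. Fix a time horizon $T>0$ and a constant $K>0$ to be chosen later. Because $v$ is predictable and $c_g\le g\le C_g$, the process
\[
h(s,y)=-\frac{K}{g(v(s,y))}
\]
is predictable and uniformly bounded by $K/c_g$, so Novikov's condition (\ref{eq:novikov}) holds trivially. Proposition \ref{prop:girsanov} then produces a probability measure $\mathbf{Q}_T$, mutually absolutely continuous with $\mathbf{P}_T$, under which $\tilde{W}(dy\,ds)=W(dy\,ds)-h(s,y)\,dy\,ds$ is a space-time white noise on $[0,T]\times\mathbf{I}$.

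Substituting $W=\tilde{W}+h\,dy\,ds$ into the mild form (\ref{eq:wave_undrifted_sol}) and using the cancellation $g(v(s,y))\,h(s,y)=-K$ together with the identity $\int_0^J S_{\mathbf{I}}(t-s,x-y)\,dy=t-s$ (which follows by unwrapping the periodization, since $\int_\mathbf{R} S(t-s,x-y)\,dy=t-s$), the mild equation rewrites as
\[
v(t,x)=w(t,x)-\frac{Kt^2}{2}+\tilde{N}_v(t,x),
\]
where $\tilde{N}_v(t,x)$ is the stochastic convolution of $g(v)$ against $\tilde{W}$. The deterministic part $w$ is bounded on $[0,T]\times\mathbf{I}$ by a constant $M_0$ depending only on the data $u_0,u_1$ and $T$ (Assumptions (i)--(iii)), and by Walsh's isometry together with $g\le C_g$, the $\mathbf{Q}_T$-variance of $\tilde{N}_v(T,x)$ is bounded by a constant $\sigma^2$ depending only on $C_g$, $T$, and $J$.

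Now choose $K$ so large that $KT^2/2>M_0+2\sigma$; by Chebyshev, $\mathbf{Q}_T(v(T,x_0)<0)\ge 3/4$ for any fixed $x_0\in[0,J]$. Since $v(0,x_0)=u_0(x_0)\ge c_0>0$ and $v(\cdot,x_0)$ is almost surely continuous on $[0,T]$ (apply Theorem \ref{th:holder} to the bounded predictable process $\rho=g(v)$), the intermediate value theorem yields some $t_0\in(0,T)$ with $v(t_0,x_0)=0$ on this event. Hence the event that $v$ hits $0$ has positive $\mathbf{Q}_T$-probability, and mutual absolute continuity between $\mathbf{Q}_T$ and $\mathbf{P}_T$ transfers the conclusion to $\mathbf{P}$. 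The main piece of book-keeping will be to verify that the substitution $W=\tilde{W}+h\,dy\,ds$ is legal inside the stochastic integral and that Walsh's isometry still delivers the stated variance bound under $\mathbf{Q}_T$; beyond that, the argument reduces to routine Gaussian estimates.
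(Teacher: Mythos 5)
Your argument is correct, and it follows the same overall strategy as the paper---use Girsanov's theorem to superimpose a large downward drift of order $K$ on the driftless equation and then show the remaining noise cannot prevent the solution from crossing zero---but the execution is genuinely different in three respects. First, the paper takes the constant shift $h\equiv -K$, which leaves the random drift $-K\int_0^t\int_0^J S_{\mathbf{I}}(t-s,x-y)g(v(s,y))\,dy\,ds$ and requires the lower bound $g\geq c_g$ to control it; your choice $h=-K/g(v)$ cancels $g$ exactly and yields the purely deterministic drift $-Kt^2/2$, at the cost of a random (but still bounded, hence Novikov-admissible) Girsanov density. Second, the paper passes to the spatial average $V(t)=\int_0^J v(t,x)\,dx$, which requires the stochastic Fubini theorem, and then controls the resulting one-parameter martingale via the Dambis--Dubins--Schwarz time change and the reflection principle; you work pointwise at a fixed $x_0$ and control the stochastic convolution by the Walsh isometry and Chebyshev, which is more elementary and even gives a quantitative bound ($\mathbf{Q}_T$-probability at least $3/4$) where the paper only gets ``$<1$'' for the complementary event. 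Third, both proofs finish with continuity plus the intermediate value theorem and transfer positive probability back through the equivalence of $\mathbf{Q}_T$ and $\mathbf{P}_T$. The two items you flag as book-keeping---substituting $W=\tilde{W}+h\,dy\,ds$ inside the Walsh integral for a bounded predictable integrand, and applying the isometry under $\mathbf{Q}_T$ where $\tilde{W}$ is a genuine white noise---are standard and are used implicitly in the paper's own decomposition $N_v=N_v^{(1)}-N_v^{(2)}$, so your proof is complete as outlined.
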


\begin{proof}
Let $V(t)=\int_0^Jv(t,x)dx$.  By the almost sure continuity of 
$v(t,x)$ (see \cite{wal86}) Chapter III, it suffices to show that 
\begin{equation}
\label{eq:v-below-0}
\mathbf{P}\big(V(t)<0\big)>0.
\end{equation}
Since $\int_0^JS_{\mathbf{I}}(t,x-y)dy=t$ by the definition of the 
one-dimensional wave kernel, and since
$\int_0^J\frac{1}{2}\big(u_0(x+t)+u_0(x-t)\big)dx=\int_0^Ju_0(x)dx$, 
\begin{equation*}
V(t)=\int_0^Ju_0(x)dx+t\int_0^Ju_1(x)dx
 +\int_{0}^{t}\int_0^J(t-s)g(v(s,y))W(dyds).
\end{equation*}
Here we have used the stochastic Fubini theorem (see \cite{wal86}, Theorem 
2.6) to change the order of integration in the double integral.  Let us 
define $N_v(t)$ as the double integral:
\begin{equation*}
N_v(t)=\int_{0}^{t}\int_0^J(t-s)g(v(s,y))W(dyds).
\end{equation*}

The question would be easy if $g\equiv1$, as $N_v(t)$ would be a 
Gaussian variable, with a positive probability of taking values below any 
desired level.  Since this is not necessarily the case, we use another 
Girsanov transformation to bound $N_v(t)$ by a Gaussian process.  

Fix $t>0$. Choose $K$ sufficiently large so that
\begin{equation} 
\label{eq:lower_undrifted_k_bound}
\frac{c_gJKt^2}{2}-\int_0^Ju_0(x)dx-t\int_0^Ju_1(x)dx>0.
\end{equation}
Using Theorem \ref{th:girsanov}, we define $ \tilde{W} $ as a $\tilde{\mathbf{P}}$
white noise, where $\mathbf{P}$ and $\tilde{\mathbf{P}}$ are equivalent and
\begin{equation*}
W(dyds)=\tilde{W}(dyds)-Kdyds.
\end{equation*}
Decompose $ N_v(t) = N_v^{(1)}(t) - N_v^{(2)}(t) $, where
\begin{align*}
N_v^{(1)}(t)&=\int_0^t\int_0^J(t-s)g(v(s,y))\tilde{W}(dyds)\\
N_v^{(2)}(t)&=\int_0^t\int_0^J(t-s)g(v(s,y))Kdyds.
\end{align*}
Since $g(v(s,y))$ is bounded below by $c_g>0$, we have:
\begin{equation*}
N_v^{(2)}(t)\geq\frac{c_g JKt^2}{2}.
\end{equation*}
Hence to show (\ref{eq:v-below-0}), it suffices to prove that
\begin{equation*}
\mathbf{P}\left(N_v^{(1)}(t)<\frac{c_gJKt^2}{2}-\int_0^Ju_0(x)dx
		-t\int_0^Ju_1(x)dx\right)>0
\end{equation*}
and since $\mathbf{P}$ and $ \tilde{\mathbf{P}} $ are equivalent, we can show instead that
\begin{equation} 
\label{eq:lower_undrifted_bound}
\tilde{\mathbf{P}}\left(N_v^{(1)}(t)<\frac{c_gJKt^2}{2}-\int_0^Ju_0(x)dx
 -t\int_0^Ju_1(x)dx\right)>0.
\end{equation}
	
We define the process
\begin{equation*}
M_t(r)=\int_0^r\int_0^J(t-s)g(v(s,y))\tilde{W}(dyds).
\end{equation*}
Since $g$ is bounded, $M_t(r)$ is an $\mathcal{F}_r$-martingale in $r$, 
for $r\leq t$. Hence, from Theorem V.1.6 in Revuz and Yor \cite{ry99}, there 
exists a one-dimensional standard Brownian motion $B$ such that
$M_t(r)=B\left(\tau(r)\right)$, where the time change $\tau(r)$ is given by the
predictable process:
\begin{equation*}
\begin{split}
\tau(r)&=\int_0^r\int_0^J(t-s)^2g^2(v(s,y))dyds \\
 &\leq C_g^2\int_0^r\int_0^J(t-s)^2dyds \\
 &=\frac{C_g^2J}{3}t^3-\frac{C_g^2J}{3}(t-r)^3.
\end{split}
\end{equation*}
Then let
\begin{equation*}
L=\frac{C_g^2J}{3}t^3
\end{equation*}
so we have $ \tau(t) \leq L $. Using this, we find that:
\begin{equation*}
N_v^{(1)}(t)=M_t(t)=B\left(\tau(t)\right)\leq\sup_{0\leq q\leq L}B(q).
\end{equation*}
Due to (\ref{eq:lower_undrifted_k_bound}), we can use the reflection principle to find that
\begin{align*}
\begin{split}
\tilde{\mathbf{P}}&\left(\sup_{0\leq q\leq L}B(q)
 \geq\frac{c_gJKt^2}{2}-\int_0^Ju_0(x)dx-t\int_0^Ju_1(x)dx\right) \\
&\leq2\tilde{\mathbf{P}}\left(B(L)\geq\frac{c_gJKt^2}{2}-\int_0^Ju_0(x)dx
	-t\int_0^Ju_1(x)dx\right) \\
&< 1 \qquad\qquad\qquad \text{(since $ B(L) \sim \mathcal{N}(0, L) $)}
\end{split}
\end{align*}
from which (\ref{eq:lower_undrifted_bound}) follows, and the proof of Lemma 
\ref{lem:v-hit-0} is complete.  
\end{proof}

\subsection{Removing the Drift Term}

To finish the proof of Theorem \ref{th:1}, it suffices to show that up to 
the first time $\tau$ that $u$ and $v$ hit 0, these two processes induce 
equivalent probability measures on the canonical paths consisting of 
continuous functions $f(t,x)$ on $[0,\tau(f)]\times[0,J]$.  

Given a (possibly random) function 
$f:[0,\infty)\times[0,J]\rightarrow\mathbf{R}$, define the
hitting times
\begin{align*}
\tau^{(f)}&=\inf\left\{t>0:\inf_{x\in[0,J]}f(t,x)\leq0\right\} \\
\alpha_m^{(f)}&=\inf\left\{t>0:\int_0^t\int_0^Jf(s,x)^{-2\alpha}dxds>m\right\}
\end{align*}
and for a constant $ T > 0 $, let
\begin{equation*}
T_m(f)=\tau^{(f)}\wedge\alpha_m^{(f)}\wedge T.
\end{equation*}
Then define the truncated function $ f^{T_m(f)} $ by:
\begin{equation*}
f^{T_m(f)}(t,x)=f(t,x)\mathbf{1}_{\{t\leq T_m(f)\}}.
\end{equation*}

Let $\mathbf{P}_u^{T_m(u)}$, $\mathbf{P}_v^{T_m(v)}$ be the measures on path space
$\mathcal{C}\left([0,\infty)\times[0,J],\mathbf{R}\right) $ induced by 
$u^{T_m(u)}(t,x)$, $v^{T_m(v)}(t,x)$ respectively, and let
\begin{equation}
h(r) =
\begin{cases}
\label{eq:h-drift}
\frac{r^{-\alpha}}{g(r)}&\text{ if }r\neq0 \\
0&\text{ if }r=0.
\end{cases}
\end{equation}
We then obtain the following Girsanov transformation:
\begin{lemma} \label{lem:lower_girsanov}
For each $ m\in\mathbf{N} $, the measures $\mathbf{P}_u^{T_m(u)}$ and $\mathbf{P}_v^{T_m(v)}$ 
are equivalent, with
\begin{align*}
\frac{d\mathbf{P}_u^{T_m(u)}}{d\mathbf{P}_v^{T_m(v)}}
 =\exp\bigg(\int_0^{T_m(v)}&\int_0^Jh\left(v(t,x)\right)W(dxdt) \\
& -\frac{1}{2}\int_0^{T_m(v)}\int_0^Jh\left(v(t,x)\right)^2dxdt\bigg).
\end{align*}
\end{lemma}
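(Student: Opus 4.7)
The plan is to apply Theorem \ref{th:girsanov} with the predictable drift
\[
H(s,y) = h(v(s,y))\,\mathbf{1}_{\{s\leq T_m(v)\}},
\]
which will convert the driftless mild equation (\ref{eq:wave_undrifted_sol}) into the drifted mild equation (\ref{eq:stoch-wave-int-eq}) up to time $T_m(v)$. Strong uniqueness for the truncated SPDE (\ref{eq:stoch-wave-truncated}) will then identify the law of $v^{T_m(v)}$ under the transformed measure with $\mathbf{P}_u^{T_m(u)}$, after which the Radon-Nikodym formula is read off from (\ref{eq:girsanov_derivative}).

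First I would verify Novikov's condition (\ref{eq:novikov}) for $H$. Because $T_m(v)\leq\tau^{(v)}$, we have $v(s,y)>0$ on $\{s\leq T_m(v)\}$, so $h(v)=v^{-\alpha}/g(v)$ is well-defined there and assumption (iv) gives $h(v)^2\leq v^{-2\alpha}/c_g^2$. Since also $T_m(v)\leq\alpha_m^{(v)}$, the definition of $\alpha_m^{(v)}$ yields
\[
\int_0^T\!\int_0^J H(s,y)^2\,dy\,ds \;\leq\; \frac{1}{c_g^2}\int_0^{T_m(v)}\!\int_0^J v(s,y)^{-2\alpha}\,dy\,ds \;\leq\; \frac{m}{c_g^2}.
\]
This deterministic bound trivially implies (\ref{eq:novikov}), so Proposition \ref{prop:girsanov} gives that $\mathcal{E}_H$ is a uniformly integrable $\mathcal{F}_t$-martingale on $[0,T]$.

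Applying Theorem \ref{th:girsanov} now produces an equivalent probability measure $\mathbf{Q}_T\sim\mathbf{P}_T$ with $d\mathbf{Q}_T/d\mathbf{P}_T=\mathcal{E}_H(T)$, under which $\tilde W(dy\,ds)=W(dy\,ds)-H(s,y)\,dy\,ds$ is a space-time white noise. Substituting $W=\tilde W+H\,dy\,ds$ into (\ref{eq:wave_undrifted_sol}) and using the identity $g(r)h(r)=r^{-\alpha}$ for $r>0$, I find that for $t\leq T_m(v)$,
\[
v(t,x)=w(t,x)+\int_0^t\!\int_0^J S_\mathbf{I}(t-s,x-y)v(s,y)^{-\alpha}\,dy\,ds+\int_0^t\!\int_0^J S_\mathbf{I}(t-s,x-y)g(v(s,y))\,\tilde W(dy\,ds),
\]
which is exactly the mild equation (\ref{eq:stoch-wave-int-eq}) satisfied by $u$ under $\mathbf{P}_T$ up to time $T_m(u)$.

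The main obstacle is turning this coincidence of mild equations into an equality of path-space laws. Here I would invoke strong pathwise uniqueness for the Lipschitz-coefficient equation (\ref{eq:stoch-wave-truncated}): up to time $T_m(v)$, the process $v$ agrees $\mathbf{Q}_T$-a.s.\ with the unique strong solution $u_N$ of (\ref{eq:stoch-wave-truncated}) driven by $\tilde W$ on any subevent where the floor at $1/N$ remains inactive, and the analogous identification holds for $u$ driven by $W$ under $\mathbf{P}_T$. Letting $N\to\infty$ and using the consistency $u_N\to u$ (respectively $v_N\to v$) up to the relevant truncation time, the $\mathbf{Q}_T$-law of $v^{T_m(v)}$ coincides with the $\mathbf{P}_T$-law of $u^{T_m(u)}$. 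Pushing the identity (\ref{eq:girsanov_derivative}) forward by the map $\omega\mapsto v^{T_m(v)}(\omega)$ then yields the Radon-Nikodym density displayed in the lemma.
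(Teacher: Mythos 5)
Your proposal is correct and follows essentially the same route as the paper: verify Novikov's condition for $h(v)$ truncated at $T_m(v)$, apply Theorem \ref{th:girsanov} to absorb the drift $v^{-\alpha}=g(v)h(v)$ into the noise, and identify the law of $v^{T_m(v)}$ under the transformed measure with that of $u^{T_m(u)}$ via uniqueness for the mild equation. In fact you supply a detail the paper leaves implicit, namely the deterministic bound $\int_0^{T_m(v)}\int_0^J h(v)^2\,dy\,ds\leq m/c_g^2$ coming from $T_m(v)\leq\alpha_m^{(v)}$ and $g\geq c_g$, which is exactly why the stopping at $\alpha_m^{(v)}$ is needed.
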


\begin{proof}
First, we note that $h(v^{T_m(v)}(t,x))$ satisfies the Novikov condition 
given in (\ref{eq:novikov}). Then, define the probability measure 
$\mathbf{Q}^{T_m(v)}$ by the derivative
\begin{equation*}
\begin{split}
\frac{d\mathbf{Q}^{T_m(v)}}{d\mathbf{P}_v^{T_m(v)}}
&=\exp\bigg(\int_0^T\int_0^Jh\left(v^{T_m(v)}(t,x)\right)W(dxdt)  \\
&\qquad\qquad-\frac{1}{2}\int_0^T\int_0^Jh\left(v^{T_m(v)}(t,x)\right)^2dxdt\bigg) \\
&=\exp\bigg(\int_0^{T_m(v)}\int_0^Jh\left(v(t,x)\right)W(dxdt)  \\
&\qquad\qquad -\frac{1}{2}\int_0^{T_m(v)}\int_0^Jh\left(v(t,x)\right)^2dxdt\bigg).
\end{split}
\end{equation*}
Then, from Theorem \ref{th:girsanov}, it follows that
\begin{equation*}
\tilde{W}(dxdt)=W(dxdt)-h\left(v^{T_m(v)}(t,x)\right)dxdt
\end{equation*}
is a space-time white noise random measure under $\mathbf{Q}^{T_m(v)}$. Note 
that $\mathbf{Q}^{T_m(v)}$ is the measure on 
$\mathcal{C}\left(\left[0,\infty\right)\times[0,J],\mathbf{R}\right)$ 
induced by $f^{T_m(f)}(t,x)$ where $f(t,x)$ satisfies
\begin{equation*}
\begin{split}
f(t,x)&=\frac{1}{2}\left(u_0(x+t)+u_0(x-t)\right)+\int_0^Ju_1(y)S_{\mathbf{I}}(t,x-y)dy \\
&\quad+\int_0^t\int_0^Jg(f(s,y))S_{\mathbf{I}}(t-s,x-y)W(dyds) \\
&=\frac{1}{2}\left(u_0(x+t)+u_0(x-t)\right)+\int_0^Ju_1(y)S_{\mathbf{I}}(t,x-y)dy \\
&\quad+\int_0^t\int_0^Jf(s,y)^{-\alpha}S_{\mathbf{I}}(t-s,x-y)dxdt \\
&\quad+\int_0^t\int_0^Jg(f(s,y))S_{\mathbf{I}}(t-s,x-y)\tilde{W}(dyds).
\end{split}
\end{equation*}
where the last term is a Walsh integral with respect to the underlying 
measure $\mathbf{Q}^{T_m(v)}$.  However, these are just the paths of $u^{T_m(u)}(t,x)$, 
so the measure $\mathbf{Q}^{T_m(v)}=\mathbf{P}_u^{T_m(u)}$. Then Lemma 
\ref{lem:lower_girsanov} follows.  
\end{proof}

Now we wish to apply Lemma \ref{lem:lower_girsanov} with $h$ as defined in 
\eqref{eq:h-drift}.  This depends on the finiteness of $\alpha_m(f)$ for 
some $m$.  Thus \ref{th:1} follows from the following lemma, which 
we prove by using the regularity of the stochastic wave equation.
\begin{lemma} 
\label{lem:lower_main}
For any constant $T>0$,
\begin{equation} 
\label{eq:lower_main}
\int_0^{\tau^{(v)}\wedge T}\int_0^Jv(t,x)^{-2\alpha}dxdt<\infty
\end{equation}
almost surely.
\end{lemma}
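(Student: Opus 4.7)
The plan is to combine the Hölder regularity of $v=w+N_v$ from Theorem~\ref{th:holder} with the mildness of the singularity permitted by $\alpha<1$. On the event $\{\tau^{(v)}>T\}$, the integrand $v^{-2\alpha}$ is bounded since $v$ is continuous and strictly positive on the compact set $[0,T]\times[0,J]$, so the integral is trivially finite. All the real work lies on the complementary event $\{\tau^{(v)}\le T\}$, where $v$ achieves $0$ at some point $(\tau^{(v)},x_*)$ in the closure of the integration domain.

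Since $w$ is $\frac{1}{2}$-Hölder by assumptions (i) and (iii), and since by Theorem~\ref{th:holder} $N_v$ is $\beta$-Hölder for any $\beta<\frac{1}{2}$ with a random Hölder constant $Y$ of finite mean, the sum $v$ is $\beta$-Hölder on $[0,T]\times[0,J]$ with constant $Y$. I would then take expectation and invoke Fubini to write
\[
\mathbf{E}\!\int_0^{\tau^{(v)}\wedge T}\!\!\int_0^J v(t,x)^{-2\alpha}\,dx\,dt
=\int_0^T\!\!\int_0^J\mathbf{E}\!\left[v(t,x)^{-2\alpha}\mathbf{1}_{t<\tau^{(v)}}\right]dx\,dt,
\]
so finiteness of the right-hand side implies a.s.\ finiteness of the left. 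By the layer-cake identity, a uniform bound on the integrand reduces to a small-ball estimate
\[
\mathbf{P}\bigl(0<v(t,x)<r\bigr)\le C\,r^{p}\qquad\text{for some }p>2\alpha,
\]
uniform in $(t,x)\in[0,T]\times[0,J]$. Since $\alpha<1$, taking $\beta$ close to $\frac{1}{2}$ leaves room provided $p$ can be chosen close to $1/\beta$.

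The main obstacle is establishing this small-ball bound, since pure Hölder regularity allows the sub-level sets $\{v<r\}$ to be fat. I would supplement the Hölder estimate with the martingale structure of $N_v$: for fixed $(t,x)$, the process $r\mapsto\int_0^r\!\int_0^J S_{\mathbf{I}}(t-s,x-y)g(v(s,y))\,W(dy\,ds)$ is a continuous martingale whose quadratic variation is bounded above by $\frac{1}{4}C_g^2 t^2$, and is bounded below using $g\ge c_g>0$. A Dambis--Dubins--Schwarz representation together with these two-sided bounds on the quadratic variation controls the law of $N_v(t,x)$ by that of a time-changed Brownian motion, yielding a density bound of order $1/t$ near the origin and hence $\mathbf{P}(0<v<r)\lesssim r/t$. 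This delivers the small-ball exponent $p=1$; combining it with the pathwise Hölder covering estimate for the zero set of $v$ to refine the exponent closer to $1/\beta$ would then give the required bound for all $\alpha<1$.
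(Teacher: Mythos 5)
Your reduction has a structural problem that prevents it from covering the stated range of $\alpha$. After Fubini you need a \emph{marginal} small-ball estimate $\mathbf{P}\bigl(0<v(t,x)<r\bigr)\le C r^{p}$ with $p>2\alpha$, uniformly in $(t,x)$. But the marginal law of $v(t,x)$ is Gaussian-like with a (at best) bounded density, so the best exponent obtainable at a fixed point is $p=1$; this covers only $\alpha<1/2$. The closing remark about refining the exponent toward $1/\beta\approx 2$ via a ``H\"older covering of the zero set'' is not an argument, and no refinement of the marginal estimate can work: $\mathbf{P}(0<v(t,x)<r)$ really is of order $r$, not $r^{2}$. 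The extra decay needed for $\alpha\in[1/2,1)$ must come from the event $\{t<\tau^{(v)}\}$ --- i.e.\ from the fact that $v$ has not yet gone negative --- and your Fubini step discards exactly that information by reducing to one point at a time. This is the heart of the paper's proof (Lemmas \ref{lem:lower_counting} and \ref{lem:lower_initial}): on a grid of spacing $\lambda_n=2^{-(1-2\epsilon)n}$ matched to the H\"older scaling, each time $v$ drops below $2^{-n}K$ there is a conditional probability at least $d_K>0$ that the next conditionally Gaussian increment drives $v$ strictly negative; since that is impossible before $\tau^{(v)}$, the number of grid points in $[0,\tau^{(v)}]\times[0,J]$ where $v\le 2^{-n}K$ is stochastically dominated by a geometric random variable with mean $d_K^{-1}$, independent of $n$. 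Summing the dyadic layers $2^{2\alpha n}\cdot\lambda_n^{2}$ then converges precisely when $\alpha<1$.

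There is also a gap in the step producing your density bound. The Dambis--Dubins--Schwarz time change $\tau(r)=\int_0^r\!\int_0^J S_{\mathbf{I}}^2(t-s,x-y)\,g(v(s,y))^2\,dy\,ds$ is random and correlated with the representing Brownian motion, so two-sided bounds $c_g^2t^2/4\le\tau(t)\le C_g^2t^2/4$ give you tail bounds (via reflection, as in Lemma \ref{lem:v-hit-0}) but \emph{not} a bound on $\mathbf{P}\bigl(B(\tau(t))\in I\bigr)$ for an interval $I$; a stopped Brownian motion with quadratic variation pinned between two constants need not have a bounded density. The paper avoids this by freezing $g$ at the start of each short time block --- producing the genuinely conditionally Gaussian term $N_n$ of \eqref{eq:lower_noise_dist} with deterministic variance $c_n^2=\lambda_n^2/4$ --- and controlling the leftover $E_n$ by the H\"older continuity of $v$ together with Burkholder--Davis--Gundy (estimate \eqref{eq:lower_reg_bound}). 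Some version of this localization-and-freezing device is needed to make any Gaussian-type lower-tail argument rigorous here.
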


\subsection{Proof of Lemma \ref{lem:lower_main}}
For this entire section, we let $v(t,x)$ be as given in 
(\ref{eq:wave_undrifted_sol}).

\subsubsection{A Rectangular Grid}
For each $ K > 0 $ define the event:
\begin{equation} \label{eq:lower_k}
\mathbf{A}(K):=\left\{\sup_{(t,x)\in[0,T]\times[0,J]}v(t,x)\leq K\right\}.
\end{equation}
Since $(t,x)\mapsto v(t,x)$ is almost surely continuous, the above supremum 
is almost surely finite, so
\begin{equation*}
\lim_{K\rightarrow\infty}\mathbf{P}\left\{\mathbf{A}(K)\right\}=1.
\end{equation*}

We split the interval $ (0, K] $ into dyadic subintervals
\begin{equation}
(0,K]=\bigcup_{n=0}^\infty\left(2^{-n-1}K,2^{-n}K\right]
\end{equation}
and observe that on the event $ \mathbf{A}(K) $,
\begin{equation} 
\label{eq:lower_lebesgue_bound}
\begin{split}
&\int_0^{\tau^{(v)}\wedge T}\int_0^Jv(t,x)^{-2\alpha}dxdt\\
&=\sum_{n=0}^{\infty}\left[\int_0^{\tau^{(v)}\wedge T}\int_0^Jv(t,x)^{-2\alpha}
 1_{\left\{2^{-n-1}K<v(t,x)\leq2^{-n}K\right\}}dxdt\right]\\
&\leq\sum_{n=0}^{\infty}\left[2^{2\alpha(n+1)}K^{-2\alpha}
 \int_0^{\tau^{(v)}\wedge T}\int_0^J1_{\left\{2^{-n-1}K<v(t,x)\leq2^{-n}K\right\}}
 dxdt\right]\\
&=\sum_{n=0}^{\infty}\Big[2^{2\alpha(n+1)}K^{-2\alpha}\\
&\qquad\times\mu\left(\left\{(t,x)\in\left[0,\tau^{(v)}\wedge T\right]\times[0,J]
 :2^{-n-1}K<v(t,x)\leq2^{-n}K\right\}\right)\Big]
\end{split}
\end{equation}
where $\mu$ denotes Lebesgue measure.
	 
Define a constant $\epsilon>0$ such that
\begin{equation*}
0<2\epsilon<1-\alpha
\end{equation*}
and for each $n\in\mathbf{N}$, consider the rectangle
\begin{equation} 
\label{eq:lower_square}
D_n=\left\{(t,x)\in\left[0,\lambda_n\right]\times\left[0,2\lambda_n\right]\right\}
\end{equation}
where
\begin{equation} 
\label{eq:lower_lambda}
\lambda_n=2^{-(1-2\epsilon)n}.
\end{equation}
As far as the optimality of this choice of $\lambda_n$, the real issue is 
why the same factor applies to both $t$ and $x$.  This is because the 
stochastic wave equation with white noise has the same regularity in both $t$ 
and $x$, namely it is H\"older $1/2-\varepsilon$.  So we do not believe 
that the result for $\alpha<1$ can be improved by better choosing $\lambda_n$.

Next, for each $ (t, x) \in D_n $, define the grids of points:
\begin{align*}
\Gamma_n(t,x)&=\Biggl[[0,T]\times[0,J]\Biggr]\bigcap
 \left[\bigcup_{k,\ell\in\mathbb{N}}\Big(t+k\lambda_n,x+2\ell\lambda_n\Big)\right] \\
 \overline{\Gamma}_n(t,x)&=\Biggl[\left[0,\tau^{(v)}\right]\times[0,J]\Biggr]\bigcap\Gamma_n(t, x).
\end{align*}
Let $\#$ denote the number of points in a set, and define the strip
\begin{equation*}
J_n=\left\{(t,x)\in\left[0,\lambda_n\right]\times[0,J]:2^{-n-1}K
		<v(t,x)\leq2^{-n}K\right\}.
\end{equation*}	
Then we have
\begin{multline} 
\label{eq:lower_lebesgue_count}
\mu\left(\left\{(t,x)\in\left[0,\tau^{(v)}\wedge T\right]\times[0,J]:2^{-n-1}K
 <v(t,x)\leq2^{-n}K\right\}\right)\\
\leq\iint_{D_n}\#\left\{(s,y)\in\overline{\Gamma}_n(t,x):v(s,y)
\leq2^{-n}K\right\}dxdt +\mu\left(J_n\right).
\end{multline}

Since $v(t,x)$ is continuous on $[0,T]\times[0,J]$ and 
$\inf_{x\in[0,J]}u_0(x)>0$, we have that
$\mu\left(J_n\right)=0$ for sufficiently large random $n$. Hence,
\begin{equation} 
\label{eq:lower_strip_bound}
\sum_{n=0}^{\infty}\left[2^{2\alpha(n+1)}K^{-2\alpha}\mu\left(J_n\right)\right]<\infty
\end{equation}
almost surely. We now place a bound on
\begin{equation*}
\#\left\{(s,y)\in\overline{\Gamma}_n(t,x):v(s,y)\leq2^{-n}K\right\}
\end{equation*}
in the upcoming lemmas.

\subsubsection{The Shifted Equation}
Let $(t,x)$ be an arbitrary point in $D_n$, as defined in 
(\ref{eq:lower_square}), and let $\theta$ be the time shift operator, 
defined by $\theta_s W(dxdt)=W(dxd(t + s))$.

Then for given $(s,y)\in\Gamma_n(t,x)$, define
\begin{equation*}
s_n^-=s-\lambda_n.
\end{equation*}
Now, we take the approach of considering $W$ as a cylindrical Wiener 
process, as described in \cite{dz92}.
Furthermore, by Theorem 9.15 on page 256 of Da Prato and Zabczyk \cite{dz92},
there is a version of our solution $\Phi_t$ which is a strong Markov process 
with respect to the Brownian filtration $(\mathcal{F}_t)_{t\geq0}$.  

Using the strong Markov property of solutions, we restart the 
equation at time
$s_n^-$:
\begin{multline} 
\label{eq:lower_restart}
v(s,y)=\frac{1}{2}\left(v\left(s_n^-,y+\lambda_n\right)
+v\left(s_n^-,y-\lambda_n\right)\right)  \\
+\int_0^JS_{\mathbf{I}}\left(\lambda_n,y-z\right)\frac{\partial v}{\partial t}\left(s_n^-,z\right)dz\\
+\int_0^{\lambda_n}\int_0^JS_{\mathbf{I}}\left(\lambda_n-r,y-z\right)
g\left(v\left(s_n^-+r,z\right)\right)\theta_{s_n^-}W(dzdr).
\end{multline}
Here, $\frac{\partial v}{\partial t}$ is regarded as a Schwartz distribution.  

We analyze \eqref{eq:lower_restart} term by term. Decompose
\begin{equation*}
v(s,y)=V_n(s,y)+N_n(s,y)+E_n(s,y)
\end{equation*}
where
\begin{align*}
V_n(s,y)=&\frac{1}{2}\left(v\left(s_n^-,y+\lambda_n\right)
 +v\left(s_n^-,y-\lambda_n\right)\right)  \\
&+\int_0^JS_{\mathbf{I}}\left(\lambda_n,y-z\right)\frac{\partial v}{\partial t}\left(s_n^-,z\right)dz\\
N_n(s,y)&=\int_0^{\lambda_n}\int_{\left\{|z-y|\leq\lambda_n\right\}}
 S_{\mathbf{I}}\left(\lambda_n-r,y-z\right)g(v(s_n^-,y))
 \theta_{s_n^-}W(dzdr)\\
E_n(s,y)=&-N_n(s,y) \\
&+\int_0^{\lambda_n}\int_0^JS_{\mathbf{I}}(\lambda_n-r,y-z)
 g(v(s_n^-+r,z))\theta_{s_n^-}W(dzdr).
\end{align*}
More specifically,
\begin{itemize}
\item First, we take $V_n$ to be the first two terms, representing the 
contribution to $v(s,y)$ from the shifted initial conditions (both position 
and velocity).
\item Next, we realize the stochastic term as the sum of a conditionally 
Gaussian term and an error term.  The former is the stochastic term 
integrated over the light cone contained in the square
$\left\{(s,y)+D_n\right\}$, with the diffusion coefficient $g$ frozen at
$v(s_n^-,y) $. We call this term the noise term, $N_n$.
\item Finally, as mentioned above the error term $E_n$ is the difference 
between the stochastic term of $v(s,y)$ minus the noise term defined above.
\end{itemize}

As alluded to above, the noise term can be rewritten as:
\begin{equation} \label{eq:lower_noise_dist}
\begin{split}
N_n(s,y)&=g(v(s_n^-,y))\int_0^{\lambda_n}\int_{\left\{|z-y|
 \leq\lambda_n\right\}}S_{\mathbf{I}}\left(\lambda_n-r,y-z\right)
 \theta_{s_n^-}W(dzdr) \\
&=g(v(s_n^-,y))c_nZ
\end{split}
\end{equation}
where $c_n^2$ is the quadratic variation of the above double integral and 
$Z$ is a standard normal random variable. Moreover, for sufficiently small 
$\lambda_n$ relative to $J$, we have:
\begin{equation} 
\label{eq:lower_noise_var}
\begin{split}
c_n^2&=\int_0^{\lambda_n}\int_{\left\{|z-y|\leq\lambda_n\right\}}S_{\mathbf{I}}^2(r,y-z)dzdr\\
&=\frac{\lambda_n^2}{4}=2^{-2(1-2\epsilon)n-2}.
\end{split}
\end{equation}

\subsubsection{A Regularity Lemma}
Now, we find bounds for $E_n$ and $N_n$ by using H\"older continuity of 
$v$. Define the events
\begin{align*}
\mathbf{B}_n&=\left\{\sup_{(s,y)\in\Gamma_n(t,x)}\left|E_n(s,y)\right|
 \leq2^{-n}\right\}\\
\mathbf{C}_n&=\left\{\sup_{(s,y)\in\Gamma_n(t,x)}\left|N_n(s,y)\right|
		\leq2^{-(1-3\epsilon)n}\right\}\\
\mathbf{A}_n&=\mathbf{A}(K)\cap\mathbf{B}_n\cap\mathbf{C}_n.
\end{align*}
Then we assert the following:

\begin{lemma} 
\label{lem:lower_regularity}
$\sum_{n=1}^{\infty}\mathbf{P}\left(\mathbf{B}_n^c\right)<\infty$ and
$\sum_{n=1}^{\infty}\mathbf{P}\left(\mathbf{C}_n^c\right)<\infty$.  
\end{lemma}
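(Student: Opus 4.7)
The plan is to establish both tail estimates by a union bound over the grid $\Gamma_n(t,x)$, which by construction has $O(\lambda_n^{-2}) = O(2^{2(1-2\epsilon)n})$ points: Gaussian tail bounds handle $\mathbf{C}_n^c$, while for $\mathbf{B}_n^c$ I will use the Burkholder-Davis-Gundy inequality (BDG) combined with the H\"older regularity from Theorem \ref{th:holder}. For $\mathbf{C}_n$: by (\ref{eq:lower_noise_dist})--(\ref{eq:lower_noise_var}) together with $g\leq C_g$, the noise term $N_n(s,y)$ is, conditionally on $\mathcal{F}_{s_n^-}$, a centered Gaussian with variance at most $C_g^2\lambda_n^2/4$. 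Hence the threshold $2^{-(1-3\epsilon)n}$ exceeds the conditional standard deviation by a factor of order $2^{\epsilon n}$, so $\mathbf{P}(|N_n(s,y)|>2^{-(1-3\epsilon)n})\leq 2\exp(-c\,2^{2\epsilon n})$ for some $c>0$. The union bound over the grid yields $\mathbf{P}(\mathbf{C}_n^c)\lesssim 2^{2(1-2\epsilon)n}\exp(-c\,2^{2\epsilon n})$, which is super-exponentially small and trivially summable in $n$.

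For $\mathbf{B}_n$, I would combine the two stochastic integrals composing $E_n$ into the single expression
\[
E_n(s,y) = \int_0^{\lambda_n}\!\!\int_0^J S_{\mathbf{I}}(\lambda_n - r, y - z)\bigl[g(v(s_n^-+r,z)) - g(v(s_n^-,y))\bigr]\,\theta_{s_n^-}W(dz\,dr),
\]
noting that the apparent discrepancy in the $z$-region between the full stochastic term and $N_n$ is resolved by the support of $S_\mathbf{I}(\lambda_n-r,y-z)$ forcing $|z-y|\leq\lambda_n$ on the circle once $\lambda_n<J/2$. By the Lipschitz property of $g$ (implicit in the well-posedness of the SPDE) and the $(1/2-\epsilon)$-H\"older regularity of $v$ on $[0,T]\times[0,J]$ supplied by Theorem \ref{th:holder}, the bracketed factor is dominated pathwise by $C\,Y\,\lambda_n^{1/2-\epsilon}$, where $Y$ is the (random) H\"older constant. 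BDG then gives
\[
\mathbf{E}\bigl[|E_n(s,y)|^{2p}\bigr] \lesssim \mathbf{E}[Y^{2p}]\,\lambda_n^{(1-2\epsilon)p}\,(\lambda_n^2/4)^p \lesssim \lambda_n^{(3-2\epsilon)p},
\]
and after Markov at level $2^{-n}$ and a union bound over $\Gamma_n(t,x)$,
\[
\mathbf{P}(\mathbf{B}_n^c) \lesssim 2^{2(1-2\epsilon)n}\cdot 2^{2np}\cdot\lambda_n^{(3-2\epsilon)p} = 2^{n(2(1-2\epsilon) - p[(3-2\epsilon)(1-2\epsilon)-2])}.
\]
For $\epsilon$ small, $(3-2\epsilon)(1-2\epsilon)-2$ is close to $1$, so any integer $p\geq 3$ makes the exponent of $n$ strictly negative, giving geometric summability.

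The principal technical obstacle is securing $\mathbf{E}[Y^{2p}]<\infty$ for $p$ as large as needed: Theorem \ref{th:holder} is stated only with $\mathbf{E}|Y|<\infty$, but its proof in the appendix (via Kolmogorov's continuity criterion applied to higher $L^q$ moments of the increments of the stochastic convolution) will in fact yield $Y\in L^q$ for every $q<\infty$. If one prefers not to lean on that stronger conclusion, one can instead work on the event $\{Y\leq M\}$ --- where the integrand bounding $E_n$ is deterministic --- carry out the above estimates with $M$-dependent constants, and close the argument by letting $M\to\infty$ together with a Borel-Cantelli-style slowly-growing choice $M=M_n$.
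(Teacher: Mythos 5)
Your proof is correct and follows essentially the same route as the paper: for $\mathbf{C}_n$ the argument (conditional Gaussian of standard deviation $\lesssim\lambda_n$, threshold larger by a factor $2^{\epsilon n}$, Gaussian tail, union bound over the $O(\lambda_n^{-2})$ grid points) is identical; for $\mathbf{B}_n$ you combine the two stochastic integrals into one, apply Burkholder--Davis--Gundy, use the Lipschitz property of $g$ and the H\"older regularity of $v$, and finish with Markov plus a union bound, exactly as in \eqref{eq:lower_reg_power_bound}--\eqref{eq:lower_reg_bound}. The one point of divergence is the step you yourself flag: you bound the increment of $v$ pathwise by $Y\lambda_n^{1/2-\epsilon}$ with a fixed H\"older exponent and then need $\mathbf{E}[Y^{2p}]<\infty$. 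The paper avoids this by letting the H\"older exponent depend on $p$: it invokes the moment form of Kolmogorov's criterion to get $\mathbf{E}\big[\sup_{\Delta_n(s,y)}|v(r,z)-v(s,y)|^{2p}\big]\lesssim\lambda_n^{p-1}$ directly (see \eqref{eq:modulus-v-moment}), which is precisely the ``stronger conclusion'' of the appendix you allude to. Your first proposed fix is therefore the right one, and it does require the small extra observation that for a fixed $\beta<1/2$ on a bounded space-time domain one can dominate the $\beta$-H\"older constant by a $\beta'$-H\"older constant with $\beta'\in(\beta,1/2)$ chosen so that Kolmogorov yields the required moment order. Your second fallback, however, does not prove the lemma as stated: restricting to $\{Y\le M\}$ and then letting $M=M_n$ grow reintroduces the divergent tail sum $\sum_n\mathbf{P}(Y>M_n)$ unless you already control the moments (or tails) of $Y$, which is the very thing being avoided; a fixed-$M$ truncation would only work if you also modify the events $\mathbf{B}_n$ and push the cutoff through the rest of Section 4, as the paper does with $\mathbf{A}(K)$. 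So rely on the higher-moment route, not the truncation.
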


To prove this lemma, we first establish a bound on the error term $E_n$. 
Recall its definition:
\begin{equation*}
\begin{split}
E_n&=-N_n(s,y)+\int_0^{\lambda_n}\int_0^JS_{\mathbf{I}}
 \left(\lambda_n-r,y-z\right)g(v(s_n^-+r,z))\theta_{s_n^-}W(dzdr)\\
&=-\int_0^{\lambda_n}\int_{\left\{|z-y|\leq\lambda_n\right\}}S_{\mathbf{I}}
 \left(\lambda_n-r,y-z\right)g(v(s_n^-,y))\theta_{s_n^-}W(dzdr)\\
&\quad+\int_0^{\lambda_n}\int_0^JS_{\mathbf{I}}\left(\lambda_n-r,y-z\right)
 g(v(s_n^-+r,z))\theta_{s_n^-}W(dzdr)
\end{split}
\end{equation*}
Note that in the integrals above, $S_{\mathbf{I}}(\lambda_n-r,y-z)=0$ 
outside of the light cone $|z-y|\leq\lambda_n$. Thus, we restrict the domain 
of integration of $z$:
\begin{equation*}
\begin{split}
E_n=&-\int_0^{\lambda_n}\int_{\left\{|z-y|\leq\lambda_n\right\}}
 S_{\mathbf{I}}\left(\lambda_n-r,y-z\right)g(v(s_n^-,y))
        \theta_{s_n^-} W(dz dr) \\
&+\int_0^{\lambda_n}\int_{\left\{|z-y|\leq\lambda_n\right\}}
 S_{\mathbf{I}}(\lambda_n-r,y-z)g(v(s_n^-+r,z))
 \theta_{s_n^-}W(dzdr)\\
=&\int_0^{\lambda_n}\int_{\left\{|z-y|\leq\lambda_n\right\}}
 S_{\mathbf{I}}\left(\lambda_n-r,y-z\right)\\
&\qquad\times\left[g(v(s_n^-+r,z))
 -g(v(s_n^-,y))\right]\theta_{s_n^-}W(dzdr)
\end{split}
\end{equation*}
	
We define the rectangle
\begin{equation*}
\Delta_n(s,y)=\left\{r\in\mathbb{R}_+,z\in[0,J]:|r-s|\leq\lambda_n,|z-y|
 \leq\lambda_n\right\}
\end{equation*}
and let $p$ be a positive integer. Then it follows that
\begin{multline*}
\mathbf{E}\left[E_n^{2p}\right]
=\mathbf{E}\bigg(\int_0^{\lambda_n}\int_{\left\{|z-y|\leq\lambda_n\right\}}
 S_{\mathbf{I}}\left(\lambda_n-r,y-z\right)\\
 \left[g(v(s_n^-+r,z))-g(v(s_n^-,y)
	)\right]\theta_{s_n^-}W(dzdr)\bigg)^{2p}
\end{multline*}
and since the integrand above is continuous in $\lambda_n$, we can use the
Burkholder-Davis-Gundy inequality to obtain:
\begin{equation*}
\begin{split}
\mathbf{E}\left[ E_n^{2p} \right]
&\lesssim_p\mathbf{E}\Bigg(\int_{s_n^-}^{s_n^-+\lambda_n}\int_{\left\{|z-y|\leq
 \lambda_n\right\}}S_{\mathbf{I}}^2\left(\lambda_n-\left(r-s_n^-\right),y-z\right)\\
&\quad\times\left[g(v(r,z))
 -g(v(s_n^-,y))\right]^2dzdr\Bigg)^{p}\\
&\lesssim_p\mathbf{E}\Bigg(\int_{s_n^-}^s\int_{\left\{|z-y|\leq\lambda_n\right\}}
 S_{\mathbf{I}}^2\left(s-r,y-z\right)\\
&\quad\times\left[g(v(r,z))
 -g(v(s_n^-,y))\right]^2dzdr\Bigg)^{p}.
\end{split}
\end{equation*}
As usual, the notation $a(x)\lesssim_{p}b(x)$ means that 
$a(x)\leq C_{p}b(x)$.  
	
Since $g$ is Lipschitz and since $(s_n^-,y)\in\Delta_n(s,y)$,
\begin{align*}
|g(v(r,z))&-g(v(s_n^-,y))|^2 \\
&\leq L_g^2\left|v(r,z)-v\left(s_n^-,y\right)\right|^2\\
&\leq L_g^2\left(2\left|v(r,z)-v(s,y)\right|^2
 +2\left|v(s,y)-v\left(s_n^-,y\right)\right|^2\right)\\
&\leq4L_g^2\sup_{(r,z)\in\Delta_n(s,y)}\left|v(r,z)-v(s,y)\right|^2
\end{align*}
	
With this bound, we get:
\begin{multline} 
\label{eq:lower_reg_power_bound}
\mathbf{E}\left[ E_n^{2p} \right]
 \lesssim\mathbf{E}\left[\sup_{(r,z)\in\Delta_n(s,y)}
 \left[\left|v(r,z)-v(s,y)\right|^{2p}\right]\right]\\
 \left(\int_{s_n^-}^s\int_{\left\{|z-y|\leq\lambda_n\right\}}
 S_{\mathbf{I}}^2\left(s-r,y-z\right)dzdr\right)^{p}.
\end{multline}
	
Recall that $v(s,y)$ is almost surely $\beta$-H\"older continuous for any 
$\beta<\frac{1}{2}$.
Setting $\beta=\frac{1}{2}-\frac{1}{2p}$, we obtain
\begin{align} 
\label{eq:modulus-v-moment}
\mathbf{E} &\left[ \sup_{(r,z) \in \Delta_n(s,y)}
 \left[\left|v(r,z)-v(s,y)\right|^{2p}\right]\right]  \\
&\qquad\lesssim_{g,p}\sup_{(r,z)\in\Delta_n(s,y)}\left(|r-s|^{\frac{1}{2}-\frac{1}{2p}}
 +|z-y|^{\frac{1}{2}-\frac{1}{2p}}\right)^{2p}\nonumber\\
&\qquad\lesssim_{g,p}\left(\lambda_n^{\frac{1}{2}-\frac{1}{2p}}\right)^{2p}
=\lambda_n^{p-1}=2^{-(1-2\epsilon)n(p-1)}.  \nonumber
\end{align}
Recalling \eqref{eq:lower_lambda}, we then bound the integral:
\begin{align} 
\label{eq:lower_reg_bound_2}
\bigg(\int_0^{\lambda_n}\int_{\left\{|z-y|\leq\lambda_n\right\}}&S_{\mathbf{I}}^2(r,y-z)dzdr\bigg)^p \nonumber\\
&=\left(\frac{1}{4}\int_0^{\lambda_n}\int_{\left\{|z-y|\leq\lambda_n\right\}}
\mathbf{1}_{\left\{|y-z|<r
 \right\}}dzdr\right)^p \\
&\lesssim \lambda_n^{2p} \nonumber\\
&\lesssim 2^{-2 (1 - 2 \epsilon) np}   \nonumber
\end{align}
so by \eqref{eq:lower_reg_power_bound}, \eqref{eq:modulus-v-moment}, and
\eqref{eq:lower_reg_bound_2}, we obtain a bound on the error term:

\begin{equation} 
\label{eq:lower_reg_bound}
\mathbf{E}\left[E_n^{2p}\right]\lesssim_{g,p}2^{-(1-2\epsilon)n(3p-1)}.
\end{equation}

\begin{proof}[Proof of Lemma \ref{lem:lower_regularity}]
Recalling that 
$\#\left\{\Gamma_n(t,x)\right\}\lesssim\lambda_n^{-2}=2^{(2-4\epsilon)n}$, 
we find:
\begin{align*}
\mathbf{P}\left(\mathbf{B}_n^c\right)
&=\mathbf{P}\left\{\sup_{(s,y)\in\Gamma_n(t,x)}
 \left|E_n(s,y)\right|>2^{-n}\right\}\\
&\leq\sum_{(s,y)\in\Gamma_n(t,x)}\mathbf{P}\left\{
 \left|E_n(s,y)\right|>2^{-n}\right\}\\
&\lesssim2^{(2-4\epsilon)n}\ \mathbf{P}\left\{
 \left|E_n(s,y)\right|>2^{-n}\right\}.
\end{align*}
By Markov's inequality, we can continue as follows,  
\begin{equation*}
\mathbf{P}\left(\mathbf{B}_n^c\right)
 \lesssim2^{(2-4\epsilon)n+2np}\ \mathbf{E}\left[E_n^{2p}\right]
\end{equation*}
after which we use \eqref{eq:lower_reg_bound} to obtain the bound:
\begin{equation*}	
\mathbf{P}\left(\mathbf{B}_n^c\right)\lesssim2^{(3-6\epsilon)n-(1-6\epsilon)np}.
\end{equation*}
Thus, the summation $\sum_{n=1}^{\infty}\mathbf{P}\left(\mathbf{B}_n^c\right)$ 
converges when $p>\frac{3-6\epsilon}{1-6\epsilon}$.

With a similar decomposition, we obtain:
\begin{equation*}
\mathbf{P}\left(\mathbf{C}_n^c\right)\lesssim2^{(4-4\epsilon)n}
 \ \mathbf{P}\left\{\left|N_n(s,y)\right|>2^{-(1-3\epsilon)n}\right\}.
\end{equation*}
Recalling \eqref{eq:lower_noise_dist} and \eqref{eq:lower_noise_var}, this 
implies:
\begin{equation*}
\begin{split}
\mathbf{P}\left(\mathbf{C}_n^c\right)&\lesssim2^{(4-4\epsilon)n}
 \ \mathbf{P}\left\{\left|g\left(v\left(s_n^-,y\right)\right)Z\right|
 >2^{(1-2\epsilon)n}2^{-(1-3\epsilon)n}\right\}\\
&\lesssim2^{(4-4\epsilon)n}\ \mathbf{P}\left\{|Z|>C_g^{-1}2^{\epsilon n}\right\}
\end{split}
\end{equation*}
where $Z$ is a standard normal random variable. Now, we use a standard tail 
estimate for the normal (often called the Chernoff bound) to conclude
\begin{equation*}
\mathbf{P}\left\{|Z|>C_g^{-1}\ 2^{\epsilon n}\right\}
 \leq2\exp{\left(-C_g^{-2}\ 2^{2\epsilon n-1}\right)}
\end{equation*}
and it follows that $\sum_{n=1}^{\infty}\mathbf{P}\left(\mathbf{C}_n^c\right)$ converges.
\end{proof}

\subsubsection{A Counting Lemma}

\begin{lemma} \label{lem:lower_counting}
For each $K>0$ there exists a constant $c_K$ such that for every 
$n\in\mathbf{N}$ and $(t,x)\in D_n$,
\begin{equation*}
\mathbf{E}\left[\#\left\{(s,y)\in\overline{\Gamma}_n(t,x):v(s,y)\leq2^{-n}K\right\};
 \mathbf{A}_n\right]\leq c_K.
\end{equation*}
\end{lemma}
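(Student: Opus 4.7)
By linearity of expectation,
\begin{equation*}
\mathbf{E}\left[\#\{(s,y)\in\overline{\Gamma}_n(t,x):v(s,y)\leq 2^{-n}K\};\mathbf{A}_n\right] = \sum_{(s,y)\in\Gamma_n(t,x)} \mathbf{P}(v(s,y)\leq 2^{-n}K,\ s\leq\tau^{(v)},\ \mathbf{A}_n).
\end{equation*}
The plan is to bound each summand by a quantity of order $\lambda_n^2$: since $\#\Gamma_n(t,x)\lesssim\lambda_n^{-2}$ by the grid spacing, this yields a constant bound $c_K$ independent of $n$ and $(t,x)$.

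For each grid point $(s,y)$ with $s\geq\lambda_n$, I would condition on $\mathcal{F}_{s_n^-}$ and use the strong Markov decomposition $v(s,y)=V_n(s,y)+N_n(s,y)+E_n(s,y)$ from (\ref{eq:lower_restart}). The term $V_n(s,y)$ is $\mathcal{F}_{s_n^-}$-measurable, while $N_n(s,y)$ is conditionally centered Gaussian with standard deviation $\sigma_n=g(v(s_n^-,y))c_n\in[c_g\lambda_n/2,\,C_g\lambda_n/2]$ by (\ref{eq:lower_noise_var}) and Assumption (iv). On the event $\mathbf{A}_n\cap\{s\leq\tau^{(v)}\}\cap\{v(s,y)\leq 2^{-n}K\}$, the bound $|E_n(s,y)|\leq 2^{-n}$ from $\mathbf{B}_n$ together with $v(s,y)\geq 0$ confines $N_n(s,y)$ to an $\mathcal{F}_{s_n^-}$-measurable interval of width $(K+2)2^{-n}$, while the bound $|N_n(s,y)|\leq 2^{-(1-3\epsilon)n}$ from $\mathbf{C}_n$ confines $V_n(s,y)$ to an $\mathcal{F}_{s_n^-}$-measurable interval $I_n$ of width $O(2^{-(1-3\epsilon)n})$. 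Applying the Gaussian density bound conditionally then gives
\begin{equation*}
\mathbf{P}\left(v(s,y)\leq 2^{-n}K,\ s\leq\tau^{(v)},\ \mathbf{A}_n\mid\mathcal{F}_{s_n^-}\right)\lesssim_K \frac{2^{-n}}{\lambda_n}\,\mathbf{1}_{\{V_n(s,y)\in I_n\}}.
\end{equation*}

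The main obstacle is obtaining an additional estimate on $\mathbf{P}(V_n(s,y)\in I_n)$ tight enough that the product above is $\lesssim_K\lambda_n^2$; the Gaussian density bound alone yields only $\lesssim_K 2^{-2\epsilon n}$ per point, which fails to sum to a constant. I would try to close this gap by iterating the same Markov decomposition one time-step earlier, at $s_n^{--}=s-2\lambda_n$: writing $v(s_n^-,y\pm\lambda_n)$ via (\ref{eq:lower_restart}) at that earlier time exhibits additional independent Gaussian increments inside $V_n(s,y)$ coming from the noise on the strip $[s_n^{--},s_n^-]\times[y-2\lambda_n,y+2\lambda_n]$ (two disjoint sub-strips at $y\pm\lambda_n$, by unit-speed propagation), and combining these with the smallness forced by the regularity events at the preceding scale should yield the needed density estimate for $V_n(s,y)$ near $I_n$. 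The finitely many grid points with $s<\lambda_n$ are handled separately using Assumption (ii): the lower bound $u_0\geq c_0$ together with almost sure continuity of $v$ guarantees that, for $n$ large enough that $2^{-n}K<c_0/2$, the event in question is empty on a deterministic time-strip containing these early points. Summing the sharpened per-point estimates over $\Gamma_n(t,x)$ then produces the required constant $c_K$.
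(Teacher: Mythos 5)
There is a genuine gap, and it is the one you yourself flag: the union-bound strategy of bounding each summand $\mathbf{P}(v(s,y)\leq 2^{-n}K,\ s\leq\tau^{(v)},\ \mathbf{A}_n)$ by $O(\lambda_n^2)$ cannot be made to work. You would need each per-point probability to be of order $\lambda_n^2\approx 2^{-2n}$, but the event $\{0\leq v(s,y)\leq 2^{-n}K\}$ asks a random variable with order-one fluctuations to land in an interval of length $\approx 2^{-n}$; no density estimate, however sharp, can push its probability below order $2^{-n}$. Your proposed fix of iterating the Markov decomposition one step back to get a density bound on $V_n(s,y)$ over the interval $I_n$ of width $O(2^{-(1-3\epsilon)n})$ would at best contribute an extra factor of $2^{-(1-3\epsilon)n}/\lambda_n=2^{-\epsilon n}$, giving a per-point bound of $2^{-3\epsilon n}$; summed over $\#\Gamma_n\approx 2^{(2-4\epsilon)n}$ points this still diverges. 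The problem is structural, not a matter of sharpening the estimates: treating the grid points marginally and using only $s\leq\tau^{(v)}$ to conclude $v(s,y)\geq 0$ throws away the one mechanism that makes the expected count finite.

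That mechanism is sequential. The paper orders the grid points lexicographically in time and looks at the subsequence of indices where $\overline{v}_n=V_n+N_n\leq 2^{-n}(K+1)$ and $V_n\leq\delta 2^{-(1-\epsilon)n}$ (which is forced on $\mathbf{A}_n$ whenever $v\leq 2^{-n}K$). At each such index, Lemma \ref{lem:lower_initial} gives a conditional probability at least $d_K>0$, given the past $\sigma$-algebra $\mathcal{F}^n_i$, that $\overline{v}_n\leq -2^{-n}$; on $\mathbf{B}_n$ this forces $v<0$ there, so the time $\tau^{(v)}$ has been passed and \emph{all subsequent grid points drop out of} $\overline{\Gamma}_n(t,x)$. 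A coupling with i.i.d.\ Bernoulli$(d_K)$ trials then shows the count is stochastically dominated by a geometric random variable, whence $\xi_n\leq d_K^{-1}=:c_K$, uniformly in $n$ and $(t,x)$. Your conditional Gaussian computation at scale $\lambda_n$ is the right local ingredient (it is essentially the content of Lemma \ref{lem:lower_initial}), but it must be deployed as a stopping-type argument along the ordered grid, not as a term-by-term bound on marginal probabilities.
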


The proof of Lemma \ref{lem:lower_counting} will require several preliminary steps.

We fix $(t,x)$ and order the points in $\Gamma_n(t,x)$ lexicographically, 
calling the $i$th point $(s_i,y_i)$ for some 
$i\in\mathcal{I}(t,x)=\{1,2,\dots,\#\{\Gamma_n(t,x)\}\}$ - i.e.,
if $i<j$ then $s_i\leq s_j$ and if $s_i=s_j$, then $x\leq x_i<x_j\mod J$. 
For given $(t,x)$, we define the set $\Delta^n(s,y)$ as follows:
\begin{equation*}
\Delta^n(s,y)=
\begin{cases}
 \left[0,s_n^-\right]\times\mathbf{I} &y=x\\
 \left(\left[0,s_n^-\right]\times\mathbf{I}\right)
\bigcup\left(\left[s_n^-,s\right]\times
 \left[x-\lambda_n,y-\lambda_n\right]
\right)&y\neq x
\end{cases}
\end{equation*}
where the interval $[x-\lambda_n,y-\lambda_n]$ on $\mathbf{I}$ is taken modulo $J$,
wrapping around whenever $x-\lambda_n>y-\lambda_n$.
(Note that this is not the same as the previously defined $\Delta_n(s,y)$).

Let $\mathcal{F}^n_i$ be the $\sigma$-algebra generated by $\dot{W}$ in the set
$\Delta^n\left(s_i,y_i\right)$. Then $V_n\left(s_i,y_i\right)$ is
$\mathcal{F}^n_i$-measurable for all $i\in\mathbb{N}$. Recall from 
(\ref{eq:lower_noise_dist}) that
\begin{equation} 
\label{eq:lower_noise_n_dist}
N_n\left(s_i,y_i\right)=g\left(v\left(\left(s_i\right)_n^-,y_i\right)\right)c_nZ_i
\end{equation}
where $c_n=2^{-(1-2\epsilon)n-1}$ and $Z_i\sim\mathcal{N}(0,1)$ is
$\mathcal{F}^n_{i+1}$-measurable but independent of $\mathcal{F}^n_i$.

Let $\mathbf{P}^n_i$ denote the conditional probability with respect to 
$\mathcal{F}^n_i$ and let $\delta>1$ be a constant depending only on $K$. 
Define
\begin{equation*}
\overline{v}_n(s,y)=V_n(s,y)+N_n(s,y).
\end{equation*}
We now prove the following lemma:

\begin{lemma} \label{lem:lower_initial}
There exists $ d_K > 0 $ such that for all $ i \in \mathcal{I}(t, x) $,
\begin{equation} \label{eq:lower_initial}
\mathbf{P}^n_i\Big[\overline{v}_n\left(s_i,y_i\right)\leq-2^{-n}\ \Big|\
 \overline{v}_n\left(s_i,y_i\right)\leq2^{-n}(K+1)\Big]\geq d_K
\end{equation}
almost surely on the event $\{V_n(s_i,y_i)\leq\delta2^{-(1-\epsilon)n}\}$.
\end{lemma}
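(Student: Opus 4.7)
The plan is to exploit the explicit Gaussian structure of $\overline{v}_n(s_i,y_i)$ under $\mathbf{P}^n_i$. Conditional on $\mathcal{F}^n_i$, both $V_n(s_i,y_i)$ and $g_i:=g(v((s_i)_n^-,y_i))$ are deterministic, with $g_i\in[c_g,C_g]$ by Assumption (iv). Since $\mathcal{F}^n_i$ is the $\sigma$-algebra generated by the noise on $\Delta^n(s_i,y_i)$, whereas $N_n(s_i,y_i)$ is driven by noise supported in the rectangle $[(s_i)_n^-,s_i]\times[y_i-\lambda_n,y_i+\lambda_n]$, a direct check shows that these two regions overlap in at most a one-dimensional segment; hence $Z_i$ is independent of $\mathcal{F}^n_i$. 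Setting $\mu=V_n(s_i,y_i)$ and $\sigma=g_ic_n$, it follows that $\overline{v}_n(s_i,y_i)\sim\mathcal{N}(\mu,\sigma^2)$ under $\mathbf{P}^n_i$.

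Because $\{\overline{v}_n(s_i,y_i)\le-2^{-n}\}\subseteq\{\overline{v}_n(s_i,y_i)\le 2^{-n}(K+1)\}$, the conditional probability in \eqref{eq:lower_initial} equals $\Phi(A)/\Phi(B)$, where
\[
A=\frac{-2^{-n}-\mu}{\sigma},\qquad B=\frac{2^{-n}(K+1)-\mu}{\sigma},
\]
and $\Phi$ denotes the standard normal distribution function. Since $\Phi(B)\le 1$, it is enough to show $\Phi(A)\ge d_K$, i.e.\ to produce a lower bound on $A$ depending only on $K$.

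On the event $\{\mu\le\delta\,2^{-(1-\epsilon)n}\}$ I would write $-2^{-n}-\mu\ge-2^{-n}-\delta\,2^{-(1-\epsilon)n}$. Recalling from \eqref{eq:lower_noise_var} that $c_n=2^{-(1-2\epsilon)n-1}$, a routine computation gives $2^{-n}/c_n=2^{1-2\epsilon n}\le 2$ and $2^{-(1-\epsilon)n}/c_n=2^{1-\epsilon n}\le 2$ for every $n\ge 0$; combined with $g_i\ge c_g$ this yields
\[
A\ \ge\ -\frac{2(1+\delta)}{c_g}.
\]
Setting $d_K:=\Phi\bigl(-2(1+\delta)/c_g\bigr)>0$ then completes the proof, and this constant depends only on $K$ (through $\delta$) and on the fixed parameter $c_g$.

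The only step requiring genuine care is the independence assertion in the first paragraph: one must verify that the carefully shaped set $\Delta^n(s_i,y_i)$ really does isolate the piece of white noise driving $N_n(s_i,y_i)$ from the piece determining $V_n(s_i,y_i)$ (and hence also $g_i=g(v((s_i)_n^-,y_i))$). Once this measure-zero overlap check is in place the rest is essentially a one-line Gaussian tail estimate, and it is precisely the strict positivity $c_g>0$ of the diffusion coefficient that prevents the noise scale $\sigma=g_ic_n$ from collapsing relative to $\mu$ and permits the same $d_K$ to work uniformly over all $i$ and all $n$.
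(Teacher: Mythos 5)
Your proof is correct, and it takes a genuinely simpler route than the paper's. Both arguments rest on the same conditional-Gaussian representation $\overline{v}_n(s_i,y_i)=V_n(s_i,y_i)+g\big(v((s_i)_n^-,y_i)\big)c_nZ_i$ with $V_n$ and $g(v((s_i)_n^-,y_i))$ being $\mathcal{F}^n_i$-measurable and $Z_i$ independent of $\mathcal{F}^n_i$; the paper simply asserts this independence, so your geometric check of the (measure-zero) overlap between $\Delta^n(s_i,y_i)$ and the light cone driving $N_n$ is, if anything, more careful than the original. Where you diverge is in handling the ratio of Gaussian probabilities. The paper splits into two cases according to the sign of $-2^{-2\epsilon n}(K+1)+2^{(1-2\epsilon)n}V_n(s_i,y_i)$ and, in the second case, invokes the Gaussian tail-ratio inequality from Lemma 8 of \cite{mp99}. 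You instead observe that the standing hypothesis $V_n(s_i,y_i)\le\delta2^{-(1-\epsilon)n}$, together with $c_n=2^{-(1-2\epsilon)n-1}$ and $g\ge c_g$, forces the numerator's standardized threshold $A=(-2^{-n}-V_n)/(g_ic_n)$ to satisfy $A\ge-2(1+\delta)/c_g$ uniformly in $n$ and $i$, so the numerator is bounded below by the fixed positive constant $\mathbf{P}\{Z\le-2(1+\delta)/c_g\}$ and bounding the denominator by $1$ suffices in every case. In other words, the hypothesis guarantees that both the mean $V_n$ and the level $-2^{-n}$ sit within $O(1)$ standard deviations $g_ic_n$ of each other, which is precisely the regime in which the tail-ratio machinery is unnecessary; that machinery would only be needed if the denominator's threshold could recede to $-\infty$ faster than the numerator's is controlled. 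Your constant $d_K=\mathbf{P}\{Z\le-2(1+\delta)/c_g\}$ depends only on $K$ (through $\delta$) and the fixed parameters $\epsilon$ and $c_g$, exactly as the lemma requires, and any positive constant suffices for the application in Lemma \ref{lem:lower_counting}, so nothing downstream is lost by the simplification.
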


\begin{proof}
From the definition of conditional probability, the left hand side of 
(\ref{eq:lower_initial}) is:
\begin{equation*}
\begin{split}
H&:=\mathbf{P}^n_i\Big[\overline{v}_n\left(s_i,y_i\right)\leq-2^{-n}
        \ \Big|\ \overline{v}_n\left( s_i, y_i \right) \leq 2^{-n} (K + 1) \Big] \\
&= \frac{\mathbf{P}^n_i\left\{ \overline{v}_n\left( s_i, y_i \right) \leq-2^{-n} \right\}}
        {\mathbf{P}^n_i\left\{ \overline{v}_n\left( s_i, y_i \right) \leq 2^{-n} (K + 1) \right\}} \\
&= \frac{\mathbf{P}^n_i\left\{ N_n\left( s_i, y_i \right) \leq -2^{-n}
                - V_n\left( s_i, y_i \right) \right\}}
        {\mathbf{P}^n_i\left\{ N_n\left( s_i, y_i \right) \leq 2^{-n}(K + 1)
                - V_n\left( s_i, y_i \right) \right\}}.
\end{split}
\end{equation*}
Using \eqref{eq:lower_noise_n_dist}, we find that
\begin{equation*}
H = \frac{\mathbf{P}^n_i\left\{ g\left( v\left( \left( s_i \right)_n^-, y_i \right) \right)
                c_n Z_i \leq -2^{-n} - V_n\left( s_i, y_i \right) \right\}}
                {\mathbf{P}^n_i\left\{ g\left( v\left( \left( s_i \right)_n^-, y_i \right) \right) c_n Z_i
                \leq 2^{-n}(K + 1) - V_n\left( s_i, y_i \right) \right\}}
\end{equation*}
and using \eqref{eq:lower_noise_var}, we find that
\begin{equation*}
H = \frac{\mathbf{P}^n_i\left\{ g\left( v\left( \left( s_i \right)_n^-, y_i \right) \right) Z_i
                \leq -2^{1-2 \epsilon n} - 2^{1+(1 - 2 \epsilon) n} V_n\left( s_i, y_i \right) \right\}}
                {\mathbf{P}^n_i\left\{ g\left( v\left( \left( s_i \right)_n^-, y_i \right) \right) Z_i
                \leq 2^{1-2 \epsilon n}(K + 1) - 2^{1+(1 - 2 \epsilon) n} V_n\left( s_i, y_i \right)\right\}}.
\end{equation*}
Then define $\rho_{n,i}=2g(v((s_i)_n^-,y_i))^{-1}$.
Note that $\rho_{n,i}$ is almost surely bounded above by $2C_g$ and 
below by $2c_g^{-1}>0 $, both uniformly in $n$ and $i$. Plugging this 
into the above equation, we find:
\begin{equation} 
\label{eq:lower_counting_ratio}
H = \frac{\mathbf{P}^n_i\left\{ Z_i \leq -\rho_{n, i}\left( 2^{-2 \epsilon n}
                + 2^{(1 - 2 \epsilon) n} V_n\left( s_i, y_i \right) \right) \right\}}
        {\mathbf{P}^n_i\left\{ Z_i \leq -\rho_{n, i}\left( -2^{-2 \epsilon n}(K + 1)
                + 2^{(1 - 2 \epsilon) n} V_n\left( s_i, y_i \right) \right) \right\}}.
\end{equation}

Now we examine $ H $ in two cases. The first case is on the event
\begin{equation} 
\label{eq:lower_counting_one}
\left\{-2^{-2\epsilon n}(K+1)+2^{(1-2\epsilon)n}V_n\left(s_i,y_i\right)
                \leq 0 \right\}.
\end{equation}
Since the denominator in (\ref{eq:lower_counting_ratio}) is less than or 
equal to $1$, we can bound $H$ below by its numerator:
\begin{equation*}
H\geq\mathbf{P}^n_i\left\{Z_i\leq-\rho_{n,i}\left(2^{-2\epsilon n}
 +2^{(1-2\epsilon)n}V_n\left(s_i,y_i\right)\right)\right\}.
\end{equation*}
Using the decomposition
\begin{equation*}
\begin{split}
&2^{-2\epsilon n}+2^{(1-2\epsilon)n}V_n\left(s_i,y_i\right)\\
&=\left(2^{-2\epsilon n}(K+2)\right)+\left(-2^{-2\epsilon n}(K+1)
 +2^{(1-2\epsilon)n}V_n\left(s_i,y_i\right)\right)\\
&\leq\left(2^{-2\epsilon n}(K+2)\right)
\end{split}
\end{equation*}
and the assumption in \eqref{eq:lower_counting_one}, we find
\begin{equation} 
\label{eq:lower_counting_one_bound}
H\geq\mathbf{P}^n_i\left\{Z_i\leq-\rho_{n,i}2^{-2\epsilon n}(K+2)\right\}.
\end{equation}
Since $\rho_{n,i}\leq2C_g$ for all $n$, we note that for all $K>0$,
$\rho_{n,i}2^{-2\epsilon n}(K+2)\rightarrow0$ as $n\rightarrow\infty$. So for
sufficiently large $n$ (depending on $K$), $H\geq1/3$. Hence in the case given
by (\ref{eq:lower_counting_one_bound}), Lemma \ref{lem:lower_initial} follows.
	
The second case is on the event
\begin{equation} \label{eq:lower_counting_two}
\left\{-2^{-2\epsilon n}(K+1)+2^{(1-2\epsilon)n}V_n\left(s_i,y_i\right)
              > 0 \right\}.
\end{equation}
Here, we use the following inequality from Lemma 8 of Mueller and Pardoux 
\cite{mp99}:  For $a,b>0$ and $Z$ a standard normal random variable,
\begin{equation*}
\frac{\mathbf{P}\{Z>a\}}{\mathbf{P}\{Z>a+b\}}\leq\frac{1}{2\mathbf{P}\{Z>1\}}
 \vee\left(1+\frac{\sqrt{e}}{1-e^{-1}}(a+b)be^{ab+\frac{b^2}{2}}\right).
\end{equation*}
Let $a=\rho_{n,i}(-2^{-2\epsilon n}(K+1)+2^{(1-2\epsilon)n}V_n(s_i,y_i))$ and 
$b=\rho_{n,i}2^{-2\epsilon n}(K+2)$.
Recalling that from our given conditions, 
$V_n(s_i,y_i)\leq\delta2^{-(1-\epsilon)n}$ almost surely, we find that:
\begin{equation*}
\begin{split}
(a+b)b&=\rho_{n,i}^2\left(2^{-2\epsilon n}+2^{(1-2\epsilon)n}V_n\left(
 s_i,y_i\right)\right)(K+2)2^{-2\epsilon n}\\
&\leq\rho_{n,i}^2\left(2^{-2\epsilon n}+2^{-2\epsilon n}\delta\right)(K+2)
        2^{-2 \epsilon n} \\
&=\rho_{n,i}^2(\delta+1)(K+2)2^{-4\epsilon n}\\
&\leq\rho_{n,i}^2(\delta+1)(K+2)
\end{split}
\end{equation*}
and
\begin{equation*}
\begin{split}
\left(a+\frac{b}{2}\right)b&=\rho_{n,i}^2\left(-(0.5K)2^{-2\epsilon n}
 +2^{(1-2\epsilon)n}V_n\left(s_i,y_i\right)\right)(K+2)2^{-2\epsilon n}\\
&\leq\rho_{n,i}^2\left(-(0.5K)2^{-2\epsilon n}+2^{-2\epsilon n}\delta\right)
 (K+2) 2^{-2 \epsilon n} \\
&= \rho_{n, i}^2 (\delta - 0.5 K)(K + 2) 2^{-4 \epsilon n} \\
&\leq\rho_{n,i}^2(\delta-0.5)(K+2)\qquad\qquad\text{(recalling that $K>1$)}
\end{split}
\end{equation*}
almost surely. Using these results with (\ref{eq:lower_counting_ratio}), we find:
\begin{equation*}
\begin{split}
H &= \frac{\mathbf{P}\{Z \leq -(a + b)\}}{\mathbf{P}\{Z \leq -a\}} \\
&= \frac{\mathbf{P}\{Z > a + b\}}{\mathbf{P}\{Z > a\}} \\
&\geq \left( 2 \mathbf{P}\{Z > 1\} \right) \wedge
 \left(1+\frac{\sqrt{e}}{1-e^{-1}}(a+b)be^{ab+\frac{b^2}{2}}\right)^{-1}\\
&\geq \left( 2 \mathbf{P}\{Z > 1\} \right) \wedge
 \left(1+\frac{\sqrt{e}}{1-e^{-1}}\rho_{n,i}^2(\delta+1)(K+2)
 e^{\rho_{n,i}^2(\delta-0.5)(K+2)}\right)^{-1}.
\end{split}
\end{equation*}
Since $\rho_{n,i}$ is almost surely uniformly bounded away from $0$ in $n$ 
and $i$, there exists $c_{g,\delta}>0 $ such that 
$\rho_{n,i}^2\geq 4c_{g,\delta}$. So:
\begin{multline*}
\left(1+\frac{\sqrt{e}}{1-e^{-1}}\rho_{n,i}^2(\delta+1)(K+2)
                e^{\rho_{n, i}^2 (\delta - 0.5)(K + 2)} \right)^{-1} \\
\geq c_{g,\delta}\left(c_{g,\delta}^{-1}+\frac{\sqrt{e}}{1-e^{-1}}(\delta+1)(K+2)
 e^{c_{g,\delta}(\delta-0.5)(K+2)}\right)^{-1}
\end{multline*}
and since $\delta$ depends only on $K$, the right hand side above is bounded 
below by some $\gamma_{K,g}>0 $. Then $H$ is bounded above by
\begin{equation*}
        d_K = 2 \mathbf{P}\{Z > 1\} \wedge \gamma_{K,g} > 0
\end{equation*}
in the case given by \eqref{eq:lower_counting_two} as well. Hence Lemma 
\ref{lem:lower_initial} follows in both cases.
\end{proof}

\begin{proof}[Proof of Lemma \ref{lem:lower_counting}]
Define
\begin{equation*}
\xi_n=\mathbf{E}\Big[\#\Big\{(s,y)\in\overline{\Gamma}_n(t,x):
                v(s, y) \leq 2^{-n} K \Big\}; \mathbf{A}_n \Big].
\end{equation*}
From the definitions of $\mathbf{A}_n$ and $\overline{\Gamma}$, it follows 
that $\xi_n$ is bounded by:
\begin{multline*}
\xi_n\leq\mathbf{E}\Big[\#\Big\{(s,y)\in\overline{\Gamma}_n(t,x):
 0<v(s,y)\leq2^{-n}K,\\
 \left|E_n(s,y)\right|\leq2^{-n},\left|N_n(s,y)\right|\leq2^{-2(1-3\epsilon)n}\Big\}\Big].
\end{multline*}
Recall that by definition, $\overline{v}_n=V_n(s,y)+N_n(s,y)=v(s,y)-E_n(s,y)$. Thus, we obtain the bound
\begin{multline*}
\xi_n\leq\mathbf{E}\Big[\#\Big\{(s,y)\in\overline{\Gamma}_n(t,x):
 -2^{-n}<\overline{v}_n(s,y)\leq2^{-n}(K+1),\\
 \left|E_n(s,y)\right|\leq2^{-n},\left|N_n(s,y)\right|\leq2^{-2(1-3\epsilon)n}\Big\}\Big].
\end{multline*}
Note that if $V_n(s,y)+N_n(s,y)\leq2^{-n}(K+1)$ and
$|N_n(s,y)|\leq2^{-2(1-3\epsilon)n}$, then for some $\delta>1$
depending on $K,\epsilon$ we have $V_n(s,y)\leq\delta2^{-(1-\epsilon)n}$. 
So we can write:
\begin{multline*}
\xi_n\leq\mathbf{E}\Big[\#\Big\{(s,y)\in\overline{\Gamma}_n(t,x):
                -2^{-n} < \overline{v}_n(s, y) \leq 2^{-n}(K + 1), \\
                V_n(s, y) \leq \delta 2^{-(1 - \epsilon) n} \Big\} \Big].
\end{multline*}
	
Let $\{\sigma_n(k)\}_{k\in\mathbb{N}}$ be the sequence of indices
$i\in\mathcal{I}$, in lexicographical order, such that both
$\overline{v}_n(s_i,y_i)\leq2^{-n}(K+1)$ and
$V_n(s_i,y_i)\leq\delta2^{-(1-\epsilon)n}$.
	
Out of the set of points on $\Gamma_n$ such that 
$\overline{v}_n\leq2^{-n}(K+1)$, one looks at the points where
$\overline{v}_n<-2^{-n}$, which would force $v$ to be negative. Thus we 
define the event
\begin{equation*}
\mathbf{D}_k=\left\{
 \overline{v}_n\left(s_{\sigma_n(k)},y_{\sigma_n(k)}\right)\leq-2^{-n}\right\}
\end{equation*}
and for $k\in\mathbb{N}$, we define the indicator random variable 
\begin{equation*}
I_k=1_{\mathbf{D}_k}.
\end{equation*}

From Lemma \ref{lem:lower_initial}, it is clear that
\begin{equation*}
\mathbf{P}\left\{I_1=1\right\}\geq d_K
\end{equation*}
and moreover, since $V_i$ and $N_i$ are $\mathcal{F}^n_j$-measurable for all 
$i<j$, we can also use Lemma \ref{lem:lower_initial} to find that
\begin{equation*}
\mathbf{P}\Big[I_k=1\ \Big|\ I_1,\dots,I_{k-1}\Big]\geq d_K
\end{equation*}
for $k>1$. Finally, let
\begin{equation*}
\overline{\sigma}_n=\inf\left\{k;I_k=1\right\}.
\end{equation*}
Since $v(s,y)\geq0$ for all $(s,y)\in\overline{\Gamma}_n$, it follows that
$\xi_n\leq\mathbf{E}\overline{\sigma}_n$.

Note that the $I_k$'s are not independent. We couple $\left\{I_k\right\}$ 
with a sequence of independent random variables $\left\{Y_k\right\}$ as 
follows: Let $\left\{U_k\right\}_{k\geq1}$ be a sequence of mutually 
independent random variables that are globally independent of the $I_k$'s 
and have uniform law on $[0,1]$. Then define
\begin{equation*}
Y_k =
\begin{cases}
0 & \text{if } I_k = 0 \text{ or } U_k > d_K/
        \mathbf{P}\Big[ I_k = 1 \ \Big|\ I_1, \dots, I_{k-1} \Big] \\
1 & \text{if } I_k = 1 \text{ and } U_k \leq d_K/
        \mathbf{P}\Big[ I_k = 1 \ \Big|\ I_1, \dots, I_{k-1} \Big]
\end{cases}
\end{equation*}

for $k\geq1$. Then clearly,
\begin{equation*}
Y_k\leq I_k
\end{equation*}
and for $ k > 1 $,
\begin{equation*}
\begin{split}
\mathbf{P}\Big[Y_k=1\ \Big|\ Y_1,\dots,Y_{k-1}\Big]
&=\mathbf{P}\Big[Y_k=1\ \Big|\ I_1,\dots,I_{k-1}\Big]\\
&=d_K
\end{split}
\end{equation*}
so $\left\{ Y_k \right\} $ is a sequence of i.i.d. random variables. Let
$ \tilde{\sigma} = \inf\left\{ k; Y_k = 1 \right\} $. Then
\begin{align*}
\overline{\sigma}_n &= \text{1st } k \text{ such that } I_k = 1 \\
\tilde{\sigma} &= \text{1st } k \text{ such that } Y_k = 1
\end{align*}
and since $ Y_k \leq I_k $, it follows that $ \overline{\sigma}_n \leq \tilde{\sigma} $. So 
\begin{equation*}
\mathbf{E}\overline{\sigma}_n \leq \mathbf{E}\tilde{\sigma} = d_K^{-1}.
\end{equation*}
\end{proof}

\subsection{Lemma \ref{lem:lower_main}, Conclusion}

Finally, we cite a measure-theoretic result related to the Borel-Cantelli Lemma:

\begin{lemma} \label{lem:borel}
Let $\left\{X_n\right\}$ be a sequence of $[0,\infty)$-valued random 
variables, and $\left\{\mathbf{F}_n\right\}$ be a sequence of events, 
such that both:
\begin{gather*}
\sum_{n=0}^{\infty} \mathbf{P}\left( \mathbf{F}_n^c \right) < \infty \\
\sum_{n=0}^{\infty} \mathbf{E}\left[ X_n; \mathbf{F}_n \right] < \infty
\end{gather*}
Then $\sum_{n=0}^{\infty}X_n<\infty$ almost surely.
\end{lemma}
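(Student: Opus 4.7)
The plan is to combine the classical first Borel--Cantelli lemma with Tonelli's theorem. The first hypothesis controls how often the ``bad'' events $\mathbf{F}_n^c$ occur, while the second controls the expected size of $X_n$ on the ``good'' events $\mathbf{F}_n$; together these should force $\sum_n X_n<\infty$ almost surely.

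First I would apply the first Borel--Cantelli lemma to $\sum_n\mathbf{P}(\mathbf{F}_n^c)<\infty$ to conclude that almost surely $\mathbf{F}_n^c$ occurs for only finitely many $n$. On this full-measure event there is a random integer $N(\omega)$ such that $\omega\in\mathbf{F}_n$, and hence $X_n=X_n\mathbf{1}_{\mathbf{F}_n}$, for every $n\geq N(\omega)$. Next, since $X_n\mathbf{1}_{\mathbf{F}_n}\geq0$, Tonelli's theorem gives
$$
\mathbf{E}\left[\sum_{n=0}^\infty X_n\mathbf{1}_{\mathbf{F}_n}\right]
=\sum_{n=0}^\infty\mathbf{E}\left[X_n;\mathbf{F}_n\right]<\infty,
$$
so the non-negative random variable $\sum_n X_n\mathbf{1}_{\mathbf{F}_n}$ is almost surely finite.

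On the almost-sure event where both conclusions hold, I would split
$$
\sum_{n=0}^\infty X_n
=\sum_{n=0}^{N(\omega)-1}X_n+\sum_{n=N(\omega)}^\infty X_n\mathbf{1}_{\mathbf{F}_n}
\leq\sum_{n=0}^{N(\omega)-1}X_n+\sum_{n=0}^\infty X_n\mathbf{1}_{\mathbf{F}_n}.
$$
The first term is a finite sum of $[0,\infty)$-valued (hence finite) random variables, and the second is finite by the previous step, giving $\sum_n X_n<\infty$ almost surely. There is no real obstacle here: this is a routine measure-theoretic lemma, and the only subtlety worth flagging is that the hypothesis $X_n\in[0,\infty)$ (rather than $[0,\infty]$) is exactly what is needed to guarantee that the initial block is finite once $N(\omega)$ is fixed.
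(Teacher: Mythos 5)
Your proof is correct and follows essentially the same route as the paper's: first Borel--Cantelli to get $\mathbf{P}(\liminf\mathbf{F}_n)=1$, then Tonelli (monotone convergence) on the second hypothesis to get $\sum_n X_n\mathbf{1}_{\mathbf{F}_n}<\infty$ almost surely, and finally the observation that the two series differ by a finite initial block. Your explicit remark about why $X_n\in[0,\infty)$ (not $[0,\infty]$) matters for that initial block is a worthwhile clarification of a point the paper leaves implicit.
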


\begin{proof}
Let $\mathbf{F}=\{\sum_{n=0}^{\infty}X_n=+\infty\}$. Then on the event
$\mathbf{F}\cap\left(\liminf\mathbf{F}_n\right)$, we have
$\sum_{n=0}^{\infty}X_n\mathbf{1}_{\mathbf{F}_n}=+\infty$. So from the 
second condition, we get $\mathbf{P}(\mathbf{F}\cap\liminf\mathbf{F}_n)=0$. 
However, from the first condition and Borel-Cantelli, we find:
\begin{equation*}
\mathbf{P}\left( \limsup \mathbf{F}_n^c \right) = 0
        \Rightarrow \mathbf{P}\left( \liminf \mathbf{F}_n \right) = 1
\end{equation*}
Implying that $\mathbf{P}(\mathbf{F})=0 $, which is our desired result.
\end{proof}

\begin{proof}[Proof of Lemma \ref{lem:lower_main}]
From equations \eqref{eq:lower_lebesgue_bound} and 
\eqref{eq:lower_lebesgue_count}, we have:
\begin{equation*}
\begin{split}
& 1_{A(K)} \int_0^{\tau^{(v)} \wedge T} \int_\mathbf{I} v(t, x)^{-2 \alpha} \ dx dt \\
&\leq 1_{A(K)} \sum_{n=0}^{\infty} \Big[ 2^{2 \alpha (n + 1)} K^{-2 \alpha} \\
        &\qquad \times\mu \left( \left\{ (t, x) \in \left[0, \tau^{(v)} \wedge T \right]
  \times \mathbf{I}:2^{-n - 1}K<v(t,x)\leq2^{-n}K\right\}\right)\Big].
\end{split}
\end{equation*}
Now consider the above expression $\mu(\cdots)$.  First, we can bound this 
expression above by dropping the inequality $2^{-n-1}K<v(t,x)$, which 
enlarges the set under consideration.  Secondly, we note that 
\begin{equation*}
\bigcup_{(t,x)\in D_n}\overline{\Gamma}_n(t,x)
\end{equation*}
covers the entire $(t,x)$-plane, because $\overline{\Gamma}_n(t,x)$ consists 
of the corners of rectangles which are translations of $D_n$, further 
translated by $(t,x)$.  So we can continue as follows.  
\begin{equation*}
\begin{split}
1_{A(K)}& \int_0^{\tau^{(v)} \wedge T} \int_\mathbf{I} v(t, x)^{-2 \alpha} \ dx dt \\
&\leq 1_{A(K)} \sum_{n=0}^{\infty} \Big[ 2^{2 \alpha (n + 1)} K^{-2 \alpha} \\
&\qquad    \times\iint_{D_n} \# \left\{ (s, y) \in \overline{\Gamma}_n(t, x) : v(s, y)
        \leq 2^{-n} K \right\} \ dx dt \Big] \\
&\quad + 1_{A(K)} \sum_{n=0}^{\infty} \Big[ 2^{2 \alpha (n + 1)} K^{-2 \alpha}
        \ \mu\left( J_n \right) \Big].
\end{split}
\end{equation*}
Now consider the summation of expectations
\begin{equation*}
\begin{split}
&\sum_{n=0}^{\infty}\mathbf{E}\bigg[1_{A(K)}\ 2^{2\alpha(n+1)}K^{-2\alpha}\\
&\qquad\iint_{D_n}\#\left\{(s,y)\in\overline{\Gamma}_n(t,x):v(s,y)
        \leq 2^{-n} K \right\} \ dx dt ; B_n \cap C_n \bigg] \\
&=\sum_{n=0}^{\infty}\mathbf{E}\bigg[2^{2\alpha(n+1)}K^{-2\alpha}\\
        &\qquad \iint_{D_n} \# \left\{ (s, y) \in \overline{\Gamma}_n(t, x) : v(s, y)
        \leq 2^{-n} K \right\} \ dx dt ; A_n \bigg]
\end{split}
\end{equation*}
Recalling that $\alpha<1$, and that $D_n$ was defined in 
\eqref{eq:lower_square}, we note that:
\begin{equation*}
\iint_{D_{n}} \ dx dt = 2^{-(2 - 4 \epsilon) n + 1}
\end{equation*}
so using Lemma \ref{lem:lower_counting}, we find:
\begin{equation*}
\begin{split}
& \sum_{n=0}^{\infty} \mathbf{E} \bigg[ 2^{2 \alpha (n + 1)} K^{-2 \alpha} \\
& \qquad  \times\iint_{D_n} \# \left\{ (s, y) \in \overline{\Gamma}_n(t, x) : v(s, y)
        \leq 2^{-n} K \right\} \ dx dt ; A_n \bigg] \\
&= \sum_{n=0}^{\infty} 2^{2 \alpha (n + 1)} K^{-2 \alpha}   \\
&\qquad \times\iint_{D_n} \mathbf{E} \left[
        \# \left\{ (s, y) \in \overline{\Gamma}_n(t, x) : v(s, y) \leq 2^{-n} K \right\} ; A_n
        \right] \ dx dt \\
&\leq \sum_{n=0}^{\infty} 2^{2 \alpha (n + 1)} K^{-2 \alpha} c_K \ \iint_{D_{n}} \ dx dt \\
&= \sum_{n=0}^{\infty} c_K K^{-2 \alpha} 2^{2 \alpha + 1} 2^{(2 \alpha - 2 + 4\epsilon) n}
\end{split}
\end{equation*}
and since $\alpha<1-2\epsilon$, the summation converges. Thus from using Lemma
\ref{lem:lower_regularity}, Lemma \ref{lem:borel}, and 
\eqref{eq:lower_strip_bound}, we have
\begin{equation} 
\label{eq:lower_main_k}
1_{A(K)}\int_0^{\tau^{(v)}\wedge T}\int_0^Jv(t,x)^{-2\alpha}dxdt<\infty
\end{equation}
almost surely. Observe that (\ref{eq:lower_k}) and (\ref{eq:lower_main_k}) 
imply (\ref{eq:lower_main}) since
\begin{equation*}
\begin{split}
&\int_0^{\tau^{(v)}\wedge T}\int_\mathbf{I}v(t,x)^{-2\alpha}dxdt\\
&=1_{A(K)}\int_0^{\tau^{(v)}\wedge T}\int_\mathbf{I}v(t,x)^{-2\alpha}dxdt
 +1_{A(K)^c}\int_0^{\tau^{(v)}\wedge T}\int_\mathbf{I}v(t,x)^{-2\alpha}dxdt\\
&\leq1_{A(K)}\int_0^{\tau^{(v)}\wedge T}\int_\mathbf{I}v(t,x)^{-2\alpha}dxdt
 +\int_0^{\tau^{(v)}\wedge T}\int_\mathbf{I}K^{-2\alpha}dxdt\\
&<\infty
\end{split}
\end{equation*}
almost surely.  Indeed, on $A(K)^c$ one knows that $v(t,x)>K$ for 
$(t,x)\in[0,T]\times[0,J]$, so $v^{-2\alpha}(t,x)\leq K^{-2\alpha}$ in this 
situation.  
\end{proof}

\appendix
\section{Proof of Theorem \ref{th:holder}}

\begin{proof}
For the remainder of this section we will simply write $N(t,x)$ instead of 
$N_\rho(t,x)$.  
Fix $T>0$ and consider the space and time differences, given respectively by
$|N(t,x+k)-N(t,x)|$ and $|N(t+h,x)-N(t,x)|$.  Without loss of generality, 
let $h,k\in[0,\frac{J}{2}]$.

\subsubsection{Space Difference}
To bound $N(t,x+k)-N(t,x)$ we first write
\begin{multline*}
\Delta_k^p=\mathbf{E}\big[\left|N(t,x+k)-N(t,x)\right|^p\big] \\
=\mathbf{E}\left[\left|\int_0^t\int_0^J\rho(s,y)
 \Big(S_{\mathbf{I}}(t-s,x+k-y)-S_{\mathbf{I}}(t-s,x-y)\Big)W(dyds)\right|^p\right]
\end{multline*}
and note that for all $r$, the following stochastic integral is a martingale 
over $t$:
\begin{equation*}
\int_0^t\int_0^J\rho(s,y)\Big(S_{\mathbf{I}}(r-s,x+k-y)-S_{\mathbf{I}}(r-s,x-y)\Big)W(dy ds).
\end{equation*}
We fix $p>2 $ and use Burkholder's inequality to find a constant $C_p$ 
such that
\begin{multline*}
\mathbf{E}\left[\left|\int_0^t\int_0^J\rho(s,y)
 \left(S_{\mathbf{I}}(r-s,x+k-y)-S_{\mathbf{I}}(r-s,x-y)\right)W(dyds)\right|^p\right]\\
\leq C_p\mathbf{E}\left[\left|\int_0^t\int_S\left|\rho(s,y)\right|^2
 \left|S_{\mathbf{I}}(r-s,x+k-y)-S_{\mathbf{I}}(r-s,x-y)\right|^2dyds\right|^{p/2}\right]
\end{multline*}
for all $r$. As $C_p$ does not depend on $r$, we can set $r=t$ to obtain
\begin{align*}
&\Delta_k^p\leq \\
&\; C_p\mathbf{E}\left[\left|\int_0^t\int_0^J\left|\rho(s,y)\right|^2
 \left|S_{\mathbf{I}}(t-s,x+k-y)-S_{\mathbf{I}}(t-s,x-y)\right|^2dyds\right|^{p/2}\right].
\end{align*}
Now we use H\"older's Inequality with exponents $\frac{p}{2}$ and $\frac{p}{p-2}$:
\begin{multline*}
\Delta_k^p\leq C_p\left(\mathbf{E}\int_0^t\int_0^J\rho(v(s,y)^pdyds\right) \\ 
\times\left[\int_0^t\int_0^J\left|S_{\mathbf{I}}(t-s,x+k-y)
 -S_{\mathbf{I}}(t-s,x-y)\right|^{2p/(p-2)}dyds\right]^{p/2-1}.
\end{multline*}
Since $\rho$ is almost surely bounded, the expectation
$\mathbf{E}\int_0^t\int_S\rho(s,y)^pdyds$
is bounded by a constant depending on
$T$ and $p$. So we obtain:
\begin{equation*}
\begin{split}
\Delta_k^p&\lesssim_{p,T}\left[\int_0^t\int_0^J
 \left|S_{\mathbf{I}}(t-s,x+k-y)-S_{\mathbf{I}}(t-s,x-y)
		\right|^{2p/(p-2)}dyds\right]^{p/2-1} \\
&=\left[\int_0^t\int_0^J\left|S_{\mathbf{I}}(s,x+k-y)
 -S_{\mathbf{I}}(s,x-y)\right|^{2p/(p-2)}dyds\right]^{p/2-1} \\
&=\left[\int_0^t\int_0^J\left|\sum_{m\in\mathbb{Z}}\frac{1}{2}
 1_{\left\{|x+k-y+mJ|<s\right\}}-\sum_{m\in\mathbb{Z}}\frac{1}{2}
 1_{\left\{|x-y+mJ|<s\right\}}\right|^{2p/(p-2)}dyds\right]^{p/2-1}\\
&=\frac{1}{2^p}\left[\int_0^t\int_0^J\left|\sum_{m\in\mathbb{Z}}\left(
 1_{\left\{|x+k-y+mJ|<s\right\}}-1_{\left\{|x-y+mJ|<s\right\}}\right)
		\right|^{2p/(p-2)}dyds\right]^{p/2-1}.
\end{split}
\end{equation*}
Here and in the next few estimates, the infinite sum over $m\in\mathbb{Z}$ 
is really finite, because the indicator functions will be zero for all but
finitely many values of $m$.  

We use the inequality $\left(\sum_{n=1}^N a_n\right)^p\lesssim_{p,N}\sum_{n=1}^N a_n^p$
to get:
\begin{equation*}
\begin{split}
\Delta_k^p&\lesssim_{p,T}\frac{1}{2^p}\left[\int_0^t\int_0^J\sum_{m\in\mathbb{Z}}\left|
 1_{\left\{|x+k-y+mJ|<s\right\}}-1_{\left\{|x-y+mJ|<s\right\}}
  \right|^{2p/(p-2)}dyds\right]^{p/2-1} \\
&=\frac{1}{2^p}\left[\int_0^t\int_0^J\sum_{m\in\mathbb{Z}}\left|1_{\left\{|x+k-y+mJ|
 <s\right\}}-1_{\left\{|x-y+mJ|<s\right\}}\right|dyds\right]^{p/2-1} \\
&=\frac{1}{2^p}\left[\int_0^t\int_{\mathbb{R}}\left|1_{\left\{|x+k-y|<s\right\}}
 -1_{\left\{|x-y|<s\right\}}\right|dyds\right]^{p/2-1}\\
&\leq\frac{1}{2^p}\left(2tk\right)^{p/2-1}\lesssim_{p,T}k^{p/2-1}.
\end{split}
\end{equation*}

\subsubsection{Time Difference}
As in the last section, we can use Burkholder's inequality to find:
\begin{multline*}
\mathbf{E}\big[|N(t+h,x)-N(t,x)|^p\big] \\
\leq C_p\mathbf{E}\left[\left|\int_0^t\int_0^J\left|\rho(s,y)\right|^2
\left|S_{\mathbf{I}}(t+h-s,x-y)-S_{\mathbf{I}}(t-s,x-y)\right|^2dyds\right|^{p/2}\right] \\
+C_p\mathbf{E}\left[\left|\int_t^{t+h}\int_0^J\left|\rho(s,y)\right|^2
 \left|S_{\mathbf{I}}(t+h-s,x-y)\right|^2dyds\right|^{p/2}\right].
\end{multline*}

The first term is handled like the space difference:
\begin{equation*}
\begin{split}
&\mathbf{E}\left[\left|\int_0^t\int_0^J\left|\rho(s,y)\right|^2
 \left|S_{\mathbf{I}}(t+h-s,x-y)-S_{\mathbf{I}}(t-s,x-y)\right|^2dyds\right|^{p/2}\right]\\	
&\lesssim_{p}\left(\mathbf{E}\int_0^t\int_0^J\rho(s,y)^pdyds\right)\\
&\quad\times\left[\int_0^t\int_0^J\left|S_{\mathbf{I}}(t+h-s,x-y)-S_{\mathbf{I}}(t-s,x-y)
	\right|^{2p/(p-2)}dyds\right]^{p/2-1}.
\end{split}
\end{equation*}
Using the inequality 
$\left(\sum_{n=1}^Na_n\right)^p\lesssim_{p,N}\sum_{n=1}^Na_n^p$, we can 
continue the above inequality
\begin{equation*}
\begin{split}
&\lesssim_{p,T}\left[\int_0^t\int_0^J
 \left|S_{\mathbf{I}}(s+h,x-y)-S_{\mathbf{I}}(s,x-y)\right|^{2p/(p-2)}
		dyds\right]^{p/2-1}\\
&=\left[\int_0^t\int_0^J\left|\sum_{m\in\mathbb{Z}}\frac{1}{2}
 1_{\left\{|x-y+mJ|<s+h\right\}}-\sum_{m\in\mathbb{Z}}\frac{1}{2}
 1_{\left\{|x-y+mJ|<s\right\}}\right|^{2p/(p-2)}dyds\right]^{p/2-1}\\
&\lesssim_p\frac{1}{2^p}\left[\int_0^t\int_{\mathbf{R}}\left|
 1_{\left\{|x-y|<s+h\right\}}-1_{\left\{|x-y|<s\right\}}\right|dyds
	\right]^{p/2-1}\\
&\leq\frac{1}{2^p}\left(2th\right)^{p/2-1}\lesssim_{p,T}h^{p/2-1}.
\end{split}
\end{equation*}

For the second term, we start with H\"older's inequality again:
\begin{equation*}
\begin{split}
&\mathbf{E}\left[\left|\int_t^{t+h}\int_0^J\left|\rho(s,y)\right|^2
	\left|S_{\mathbf{I}}(t+h-s,x-y)\right|^2dyds\right|^{p/2}\right]\\
&\leq C_p\left(\mathbf{E}\int_t^{t+h}\int_0^J\rho(s,y)^pdyds\right)\\
&\quad\times\left[\int_t^{t+h}\int_0^J\left|S_{\mathbf{I}}(t+h-s,x-y)\right|^{2p/(p-2)}dyds\right]^{p/2-1}.
\end{split}
\end{equation*}
Using the inequality 
$\left(\sum_{n=1}^Na_n\right)^p\lesssim_{p,N}\sum_{n=1}^Na_n^p$ one last 
time, we continue the inequality:
\begin{equation*}
\begin{split}
&\lesssim_{p,T}\left[\int_t^{t+h}\int_0^J\left|G(t+h-s,x-y)\right|
 ^{2p/(p-2)}dyds\right]^{p/2-1}\\
&=\left[\int_0^h\int_0^J\left|S_{\mathbf{I}}(s,x-y)\right|^{2p/(p-2)}dyds
		\right]^{p/2-1}\\
&=\left[\int_0^h\int_0^J\left|\frac{1}{2}1_{\left\{|x-y|<s\right\}}\right|
 ^{2p/(p-2)}dyds\right]^{p/2-1}\\
&=\frac{1}{2p}\left(h^2\right)^{p/2-1}\lesssim_{p}h^{p/2-1}.
\end{split}
\end{equation*}

\subsubsection{Conclusion}
Putting together the space and time differences, we obtain for $ h, k $:
\begin{equation*}
\begin{split}
\mathbf{E}\left[\left|N(t+h,x+k)-N(t,x)\right|^p\right]
&\lesssim_{p,T}h^{p/2-1}+k^{p/2-1}\\
&\lesssim_{p,T}\left(\sqrt{h^2+k^2}\right)^{p/2-1}.
\end{split}
\end{equation*}

Finally, we recall Kolmogorov's continuity theorem for multiparameter 
processes \cite{wal86}, Corollary 1.2.
\begin{theorem}[Kolmogorov]
Let $R$ be a rectangle in $\mathbb{R}^n$ and $\left\{X_t,t\in R\right\}$ be 
a real-valued stochastic process.  Suppose there exist $a,b,K$ all 
positive such that for all $s,t\in R$
\begin{equation*}
\mathbf{E}\left|X_t-X_s\right|^a\leq K|t-s|^{n+b}.
\end{equation*}
Then,
\begin{enumerate}
\item $X$ has a continuous realization;
\item there exist a constant $C$ depending only on $a,b,n$ and a random 
variable $Y$ such that with probability one,
\begin{equation*}
\left|X_t-X_s\right|\leq Y|t-s|^{b/a}
 \qquad\left(\forall\right)s,t\in R
\end{equation*}
and $\mathbf{E}\left[Y^a\right]\leq CK$;
\item if $\mathbf{E}\left[\left|X_t\right|^a\right]<\infty$ for some $t\in R$ then
\begin{equation*}
\mathbf{E}\left[\sup_{t\in R}\left|X_t\right|^a\right]<\infty.
\end{equation*}
\end{enumerate}
\end{theorem}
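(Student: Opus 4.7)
The plan is to follow the classical dyadic chaining argument, specialized to the multiparameter setting. I will fix $\gamma \in (0, b/a)$ (to be sent to $b/a$ at the end) and consider for each $k \geq 0$ the dyadic grid $D_k$ of points in $R$ whose coordinates lie in $2^{-k}\mathbf{Z}$. Since $R \subset \mathbf{R}^n$, the cardinality of $D_k$ is at most $C|R| \cdot 2^{nk}$, and each point has $O(n)$ nearest neighbors at distance $2^{-k}$. The plan is to first prove the H\"older estimate on $\bigcup_k D_k$ and then extend to all of $R$ by uniform continuity, which will simultaneously furnish the continuous realization in conclusion (1).

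The core step is to control all nearest-neighbor increments at each dyadic level. For a neighboring pair $s,t \in D_k$, Markov's inequality together with the hypothesis gives
\begin{equation*}
\mathbf{P}\bigl(|X_s - X_t| > 2^{-k\gamma}\bigr) \leq K \cdot 2^{-k(n+b)} \cdot 2^{k\gamma a} = K \cdot 2^{-k(n + b - \gamma a)}.
\end{equation*}
Summing over the $O(2^{nk})$ nearest-neighbor pairs at level $k$, I obtain $\mathbf{P}(A_k) \leq CK \cdot 2^{-k(b - \gamma a)}$, where $A_k$ is the event that some nearest-neighbor increment at level $k$ exceeds $2^{-k\gamma}$. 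Since $b - \gamma a > 0$, these probabilities are summable, and Borel-Cantelli gives a (random) finite $K_0$ such that for all $k > K_0$ every nearest-neighbor increment in $D_k$ is at most $2^{-k\gamma}$.

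With this uniform control, for any two dyadic points $s, t$ with $|s-t| \approx 2^{-k_0}$, I connect them by a telescoping chain through $D_{k_0}, D_{k_0+1}, \ldots$: moving one coordinate at a time through grid edges of geometrically shrinking lengths. Summing the geometric series $\sum_{k \geq k_0} 2^{-k\gamma}$ yields a bound of the form $|X_s - X_t| \leq Y |s-t|^{\gamma}$ on $\bigcup_k D_k$, which by uniform continuity extends to all of $R$, giving (1) and (2). For (3), observe that if $X_{t_0}$ has a finite $a$-th moment for some $t_0$, then $\sup_{t \in R} |X_t| \leq |X_{t_0}| + Y \cdot \text{diam}(R)^\gamma$, and taking $a$-th moments finishes the argument.

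The main obstacle, and the delicate point that distinguishes this from a soft statement, is the moment bound $\mathbf{E}[Y^a] \leq CK$ in conclusion (2). To obtain this, rather than a pathwise Borel-Cantelli argument, I would define $Y_k = \sup\{|X_s - X_t| : s,t \in D_k \text{ neighbors}\}$ and estimate $\mathbf{E}[Y_k^a]$ directly by bounding the sum over neighbors using the hypothesis without passing through Markov, then assemble $Y \lesssim \sum_k 2^{k\gamma} Y_k$ from the telescoping and take $L^a$ norms via Minkowski. Choosing $\gamma$ strictly less than $b/a$ ensures the resulting geometric series converges in $L^a$; this is why the sharp exponent $b/a$ is attainable in the almost-sure bound but only with a weak inequality. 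The dimensional dependence $n+b$ in the hypothesis is precisely what is needed so that the $O(2^{nk})$ pairs at level $k$ can be handled by a union bound without destroying summability.
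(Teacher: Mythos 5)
First, a point of reference: the paper does not actually prove this theorem --- it is quoted verbatim from Walsh \cite{wal86}, Corollary 1.2, and used as a black box in the proof of Theorem \ref{th:holder}. So your proposal has to be measured against the statement itself rather than against an argument in the paper.

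Your dyadic chaining argument is the standard Kolmogorov--Chentsov proof, and it correctly yields conclusion (1) and an almost-sure bound $|X_t-X_s|\le Y_\gamma|t-s|^{\gamma}$ with $\mathbf{E}[Y_\gamma^a]\le C_\gamma K$ for every $\gamma<b/a$ (minor caveat: Minkowski's inequality requires $a\ge 1$, whereas the statement allows any $a>0$; for $0<a<1$ use $\mathbf{E}[(\sum_k c_kY_k)^a]\le\sum_k c_k^a\,\mathbf{E}[Y_k^a]$, which leads to the same convergence condition $\gamma<b/a$). The genuine gap is at the endpoint: conclusion (2) asserts the exponent \emph{exactly} $b/a$ together with $\mathbf{E}[Y^a]\le CK$, and your assembly $Y\lesssim\sum_k 2^{k\gamma}Y_k$ fails precisely there. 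Since $\mathbf{E}[Y_k^a]\le CK2^{-kb}$, each summand $2^{kb/a}Y_k$ has $L^a$ quasi-norm of order $K^{1/a}$ uniformly in $k$, so the series diverges when $\gamma=b/a$, and the constant $C_\gamma$ you obtain for $\gamma<b/a$ blows up as $\gamma\uparrow b/a$. Your closing remark that the sharp exponent is ``attainable but only with a weak inequality'' does not repair this: for $|t-s|$ small one has $|t-s|^{\gamma}\ge|t-s|^{b/a}$, so a $\gamma$-H\"older bound with $\gamma<b/a$ is strictly weaker than the claimed conclusion, and no limiting argument in $\gamma$ upgrades it while keeping $Y$ finite with $\mathbf{E}[Y^a]\le CK$. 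The statement as quoted is normally obtained by a different mechanism, the Garsia--Rodemich--Rumsey lemma (this is how Walsh derives his Corollary 1.2 from his Theorem 1.1): one sets $\Gamma=\int_R\int_R |X_t-X_s|^a\,p(|t-s|)^{-a}\,ds\,dt$ for a suitable gauge $p$, controls $\mathbf{E}[\Gamma]$ by Fubini and the moment hypothesis, and reads off $Y$ as a multiple of $\Gamma^{1/a}$. For what the paper actually needs your weaker version would suffice --- Theorem \ref{th:holder} is applied with $b=p/2-3$ rather than the critical $p/2-2$, and $p$ is then taken large, so there is slack in the exponent --- but as a proof of the theorem as stated, the endpoint case of conclusion (2) is missing.
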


Setting $a=p$ and $b=p/2-3$, we obtain
\begin{equation*}
\begin{split}
\left|N(t+h,x+k)-N(t,x)\right|&\leq Y\left(\sqrt{h^2+k^2}\right)^{b/a}\\
&\leq Y\left(h^{1/2-3/p}+k^{1/2-3/p}\right)
\end{split}
\end{equation*}
where $\mathbf{E}[Y]<\infty $ and depends only on $p$ and $T$. Then since 
$\beta<1/2$, we can set $p=\frac{6}{1-2\beta} $ and the conclusion follows.
\end{proof}


\def\cprime{$'$} \def\cprime{$'$} \def\cprime{$'$}
\providecommand{\bysame}{\leavevmode\hbox to3em{\hrulefill}\thinspace}
\providecommand{\MR}{\relax\ifhmode\unskip\space\fi MR }
\providecommand{\MRhref}[2]{%
  \href{http://www.ams.org/mathscinet-getitem?mr=#1}{#2}
}
\providecommand{\href}[2]{#2}

\end{document}